\newtheorem{thm}{Theorem}[section]
\newtheorem{cor}[thm]{Corollary}
\newtheorem{lem}[thm]{Lemma}
\theoremstyle{definition}
\theoremstyle{remark}
\newtheorem{rem}[thm]{Remark}
\numberwithin{equation}{section}
\def\cl#1#2{{{{C}\!{\ell}}_{{#1},{#2}}}}
\def\dl#1#2{{{{K}}_{{#1},{#2}}}}
\def\conj#1#2{{{#1}^{(#2)}}}
\def\proj#1#2{{\langle{#1}\rangle}_{#2}}
\def\dlsub#1#2#3{{{{K}}^{#3}_{{#1},{#2}}}}
\def\Tr{{\rm Tr}}
\def\tr{{\rm tr}}
\def\elt#1#2#3{#3 \in \dl{#1}{#2}}
\def\firstn#1{\{1,2,\hdots,{#1}\}}
\def\firstnwithzero#1{\{0,1,2,\hdots,{#1}\}}
\def\Det#1{{\rm Det}(#1)}
\def\det#1{{\rm det}(#1)}
\def\End#1{\mathrm{End}(#1)}
\def\Mat{{\rm Mat}}
\def\R{{\mathbb R}}
\def\C{{\mathbb C}}
\def\N{{\mathbb N}}
\def\Z{\mathbb{Z}}
\begin{document}

%
%
%
%
%
%
%
%

\title[Inverse in Commutative Analogues of Clifford Algebras]{Determinant, Characteristic Polynomial,\\ and Inverse in Commutative Analogues\\ of Clifford Algebras}

\begin{abstract}
Commutative analogues of Clifford algebras are algebras defined in the same way as Clifford algebras except that their generators commute with each other, in contrast to Clifford algebras in which the generators anticommute. In this paper, we solve the problem of finding multiplicative inverses in commutative analogues of Clifford algebras by introducing a matrix representation for these algebras and the notion of determinant in them. We give a criteria for checking if an element has a multiplicative inverse or not and, for the first time, explicit formulas for multiplicative inverses in the case of arbitrary dimension. The new theorems involve only operations of conjugation and do not involve matrix operations. We also consider notions of trace and other characteristic polynomial coefficients and give explicit formulas for them without using matrix representations.

\end{abstract}

\author[H. Sharma]{Heerak Sharma}
\address{
Indian Institute of Science Education and Research (IISER), Pune\\
Pune\\
India}

\email{heerak.sharma@students.iiserpune.ac.in}

\author[D. Shirokov]{Dmitry Shirokov}
\address{
HSE University\\
Moscow 101000\\
Russia
\medskip}
\address{
and
\medskip}
\address{
Institute for Information Transmission Problems of the Russian Academy of Sciences \\
Moscow 127051 \\
Russia}
\email{dm.shirokov@gmail.com}


\subjclass{Primary 15A15; Secondary 15A66}
\keywords{Characteristic polynomial, Clifford algebra, Commutative analogue, Determinant, Inverse, Tensor product, Trace.}
\date{\today}
\dedicatory{In memory of Professor Yuri Grigoriev}

\label{page:firstblob}
\maketitle

\section{Introduction}
The generators of a Clifford algebra anticommute. This anticommutativity is essential and allows the algebra to describe the plethora of things it is able to describe. It is then a natural question whether an analogous algebra whose generators commute with each other interesting or not. We call such an algebra a commutative analogue of Clifford algebra. We give their precise definition in Section \ref{Section: Definition of K_{p,q}s}. These algebras have previously been studied in different contexts. The most prominent examples of theses algebras are the multicomplex spaces and commutative quaternions.

Multicomplex spaces were first introduced by Segre in 1892 in his pioneering paper \cite{Segre}. They have been studied extensively and are full of beautiful results \cite{Price, MulticomplexIdeals, HolomorphyInMC, MulticomplexHyperfunctions, DifferentialEQnInMC}. The most well-known example of multicomplex spaces are the bicomplex numbers which have been studied extensively \cite{Shapiro, BCHolFunction, LunaBCAndElFun,FuncAnalBC,SingularititesBCHolFunc, GeometryIdentitityTheorems, FractalTypeSets, CauchyTypeIntegral, SingulFuncBCVar, BCHolFunCal, BicomplexHyperfunctions}. Commutative quaternions are isomorphic to bicomplex numbers and have been well studied. F. Catoni et al. in \cite{Catoni1, Catoni2} discussed commutative quaternions and in general some commutative hypercomplex numbers. H.H. Kösal and M. Tosun in \cite{CommQuatMat} considered matrices with entries from commutative quaternions.

Commutative analogues of Clifford algebras have also been considered before in other contexts. N. G. Marchuk in \cite{MarchukDemo, MarchukClassification} required the need for commutative analogues of Clifford algebras when he was considering tensor product decomposition of Clifford algebras and classifying tensor products of Clifford algebras. G. S. Staples, in his books \cite{StaplesBook1,StaplesBook2}, introduces symmetric Clifford algebras, which are commutative analogues of Clifford algebras. He uses them, along with zeons, to address combinatorial problems.

We came across these commutative analogues of Clifford algebras while studying multiplicative inverses in Clifford algebras. We found that subalgebras of Clifford algebras isomorphic to a commutative analogue of Clifford algebra are notorious in the sense that they force the formula for the determinant to be a linear combination instead of a single product. For an example, check out the subalgebra ${\rm{Span}_\R}\{1,e_{1256},e_{1346},e_{2345}\} \subseteq \cl{6}{0}$ described in \cite{A.Acus}. Naturally, we wanted to study more about these algebras hoping that they might aid us with finding multiplicative inverses in Clifford algebras. Also, as mentioned by Staples in his book \cite{StaplesBook2} (check Proposition 2.2), one can realise commutative analogues of Clifford algebras as subalgebras of some Clifford algebras and therefore studying commutative analogues of Clifford algebras could aid in study of Clifford algebras.

Coming back to the problem of finding multiplicative inverses, the problem of finding multiplicative inverses in multicomplex spaces has been considered before. To the best of our knowledge, no one has presented explicit formulas for a general multicomplex space before. For bicomplex numbers, the results are well known \cite{Price, BCHolFunction, Shapiro}. For tricomplex numbers, \cite{PlatonicSolids, DynamicsTricomplex} mention the result.  Price in his book \cite{Price} considers this problem and uses a representation of multicomplex spaces, that he calls Cauchy--Riemann matrices, to give criterias for invertiblity of elements and proves many results for small $n$. We use the same representation to give explicit formulas for general $n$ but we motivate this representation using the tensor product decomposition of commutative analogues of Clifford algebras discussed in \cite{CommAnalCliffAlg}. In our experience, our viewpoint on Cauchy--Riemann matrices would simplify many results given in Chapter 5 of Price's book and provide proofs of conjectures made at the end of the chapter.

The study, in this paper, is inspired from \cite{OnComputing}. We will introduce a matrix representation for commutative analogues of Clifford algebras, notions of trace, determinant, and other characteristic polynomial coefficients in them, and use operations of conjugations previously introduced in \cite{CommAnalCliffAlg} to find explicit formulas for them. Theorems \ref{thm: formula for trace}, \ref{thm: another formula for trace},  \ref{thm formula for determinant}, \ref{thm formula for characteristic polynomial coefficients},  \ref{formula for inverse theorem} and \ref{thm: non-invertible elements are zero divisors} are new. This paper is organized as follows: in Section \ref{Section: Definition of K_{p,q}s} we define the commutative analogues of Clifford algebras, introduce the notations that we will use in this paper and recall results about tensor product decomposition and operations of conjugation from \cite{CommAnalCliffAlg}. In Section \ref{Section: matrix rep}, we give a matrix representation for these algebras giving new perspective to Price's Cauchy--Riemann matrices. In Section \ref{Section: Notions of determinant, trace, characteristic polynomial}, we use this matrix representation to define notions of trace, determinant and characteristic polynomial. In Section \ref{Explicit formulas}, we use the operations of conjugations to give explicit formulas for trace, determinants, and characteristic polynomial coefficients. These new formulas do not involve matrix operations and involve only operations of conjugation in commutative analogues of Clifford algebras. In Section \ref{Section: mult inverses}, we discuss the multiplicative inverse problem. The conclusions follow in Section~\ref{section_conclusions}.

\section{Definitions, notations, tensor product decomposition and operations of conjugation}\label{Section: Definition of K_{p,q}s} 
\subsection{Definitions and notations}
Let $p,q \in \Z_{\ge0}$ and $n := p+q$. The \textit{commutative analogue of Clifford algebra}, denoted by $\dl{p}{q}$, is a real associative algebra with unity generated by the generators $\{e_1, e_2, \hdots, e_n\}$ which obey the following multiplication:
\begin{align*}
&{e_i}^2 = 1; \;\; \text{if} \;\; {1\le i \le p},\\
&{e_i}^2 = -1; \;\; \text{if} \;\; {p+1 \le i \le p+q :=n},\\
&{e_i}{e_j} = {e_j}{e_i} \;\; \forall \;\; i,j \in \firstn{n}.
\end{align*}
The multiplication is then extended to all of $\dl{p}{q}$ via bi-linearity. 

\subsubsection{Grades and projections}
Let $l\in\N$. For $1 \le i_1 < i_2 < \cdots < i_l \le n$ define $e_{i_1 i_2 \cdots i_l} := e_{i_1}e_{i_2} \cdots e_{i_l}$. Also, define $e_{\{\}} := 1$ and $e_{\{i_1, i_2, \cdots, i_l\}} := e_{i_1 i_2 \cdots i_l}$. $\dl{p}{q}$ as a vector space is the $N := 2^n$ dimensional space
$$\mathrm{Span}_\R\{e_A \;|\; A \subseteq \firstn{n} \}.$$
The elements $\{e_A \;|\; A \subseteq \firstn{n} \}$ form a basis for $\dl{p}{q}$.

A general element in $\dl{p}{q}$ is of the form:
\begin{multline*}
    U = u + \sum_{1 \le i \le n} u_ie_i + \!\!\sum_{1 \le i_1 < i_2 \le n}\!\! u_{i_1 i_2}e_{{i_1} {i_2}} + \quad\cdots\quad + \!\!\!\!\!\!\!\!\!\!\\ \sum_{1 \le i_i < i_2 < \cdots < i_{n-1} \le n}\!\!\!\!\!\!\!\!\!\! u_{i_1 i_2 \cdots i_{n-1}}e_{i_1 i_2 \cdots i_{n-1}} + u_{12\cdots n}e_{12\cdots n}.
\end{multline*}

Let $A\subseteq\firstn{n}$. By $|A|$ we denote the number of elements in $A$. For $k \in \firstnwithzero{n}$, we define \textit{subspace of grade $k$}, denoted by $\dlsub{p}{q}{k}$, as 
$$\dlsub{p}{q}{k} := {\rm{Span}}_{\R}\{e_A \;|\; A \subseteq \firstn{n} \text{ and } |A| = k\}$$

i.e., $\dlsub{p}{q}{k}$ is $\R$-span of basis elements of length k.

We define \textit{projection maps} $\proj{.}{k}: \dl{p}{q} \to \dlsub{p}{q}{k}$; $U \mapsto \proj{U}{k}$ which are linear maps which return the grade $k$ part of $U$. In general, 
$$U = \sum_{k=0}^{n}{\proj{U}{k}}.$$

\subsubsection{Examples}
\begin{enumerate} 
    \item One can check that multicomplex space with $n$ imaginary units is exactly $\dl{0}{n}$. For more details, we refer to reader to Section 3.3 of \cite{CommAnalCliffAlg}.

    \item The algebra $\dl{1}{1}$ is known as the algebra of commutative quaternions \cite{Catoni1, Catoni2, CommQuatMat}.

    \item The subalgebra $S = {\rm{Span}_\R}\{1,e_{1256},e_{1346},e_{2345}\} \subseteq \cl{6}{0}$ described in \cite{A.Acus} is isomorphic to $\dl{2}{0}$. This becomes apparant once we define $g_1 = e_{1256}, g_2 = e_{1346}$. Then $g_1 g_2 = g_2 g_1 = -e_{2345}$ and $g_1^2 = g_2^2 = +1$. Therefore, the subalgebra $S\cong\dl{2}{0}$ the isomorphism being identification of $g_1,g_2$ as generators of $\dl{2}{0}$. 
\end{enumerate}

\subsection{Tensor product decomposition}\label{Subsection: tensor product decomposition}
An described in \cite{CommAnalCliffAlg}, an immediate consequence of commutativity is that one can decompose any $\dl{p}{q}$ as tensor products of `$p$' $\dl{1}{0}$s and `$q$' $\dl{0}{1}$s.
\begin{thm}\label{tensor product decomposition theorem}
    It is the case that:
    \begin{equation}\label{tensor product decomposition result}
        \dl{p}{q} \cong \underbrace{\dl{1}{0} \otimes \dl{1}{0} \cdots \otimes \dl{1}{0}}_{p \; \text{times}} \otimes \underbrace{\dl{0}{1} \otimes \dl{0}{1} \cdots \otimes \dl{0}{1}}_{q \; \text{times}}.
    \end{equation}
\end{thm}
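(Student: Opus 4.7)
The plan is to build an explicit isomorphism using the universal property implicit in the definition of $\dl{p}{q}$. Let
$$A := \underbrace{\dl{1}{0}\otimes\cdots\otimes\dl{1}{0}}_{p} \otimes \underbrace{\dl{0}{1}\otimes\cdots\otimes\dl{0}{1}}_{q},$$
viewed as the tensor product of unital associative $\R$-algebras. Write $g$ for the generator of each $\dl{1}{0}$ factor (so $g^2=1$) and $h$ for the generator of each $\dl{0}{1}$ factor (so $h^2=-1$). For $i\in\firstn{n}$, let $E_i\in A$ be the elementary tensor that has $g$ in the $i$-th slot when $1\le i\le p$, $h$ in the $i$-th slot when $p+1\le i\le n$, and $1$ in every other slot.

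The first step is to verify that the $E_i$ satisfy the defining relations of $\dl{p}{q}$: that $E_i^2=1$ for $i\le p$ and $E_i^2=-1$ for $i>p$ follows factor-by-factor from $g^2=1$ and $h^2=-1$, and $E_iE_j=E_jE_i$ follows because tensors supported in different slots commute (and within a slot the algebras $\dl{1}{0}$, $\dl{0}{1}$ are themselves commutative). Second, since $\dl{p}{q}$ is the universal unital associative $\R$-algebra on generators $e_1,\ldots,e_n$ subject to exactly these relations, we obtain a unique algebra homomorphism
$$\phi\colon \dl{p}{q}\longrightarrow A,\qquad \phi(e_i)=E_i.$$

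To see $\phi$ is an isomorphism, I would compute the images of the basis $\{e_A\mid A\subseteq\firstn{n}\}$: for $A=\{i_1<\cdots<i_l\}$, $\phi(e_A)=E_{i_1}\cdots E_{i_l}$ is the elementary tensor with $g$ or $h$ in the slots indexed by $A$ and $1$ elsewhere. As $A$ ranges over subsets of $\firstn{n}$, these elementary tensors are exactly the tensor-product basis obtained from the bases $\{1,g\}$ of $\dl{1}{0}$ and $\{1,h\}$ of $\dl{0}{1}$. Hence $\phi$ sends a basis of $\dl{p}{q}$ bijectively to a basis of $A$, and since both spaces have dimension $2^n$, $\phi$ is an $\R$-linear isomorphism, and therefore an algebra isomorphism.

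The only real obstacle is being careful with bookkeeping: the ``universal property'' I am invoking is immediate from the presentation of $\dl{p}{q}$ by generators and relations, but one must check that the commutativity relation $e_ie_j=e_je_i$ is actually needed and sufficient for $\phi$ to be well defined, because the ordered product $e_{i_1}\cdots e_{i_l}$ used in the basis is only unambiguous thanks to that relation. Once the basis comparison is set up correctly, the dimension count $\dim A=2^p\cdot 2^q=2^n=\dim\dl{p}{q}$ closes the argument without further computation.
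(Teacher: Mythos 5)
Your proof is correct and uses exactly the isomorphism the paper relies on: the paper defers the proof of Theorem \ref{tensor product decomposition theorem} to the companion work \cite{CommAnalCliffAlg}, but the slot-wise map $e_j \mapsto 1\otimes\cdots\otimes e_1 \otimes\cdots\otimes 1$ that you construct (and verify via the relations, the basis comparison, and the dimension count $2^p\cdot 2^q = 2^n$) is precisely the isomorphism recalled in Section \ref{Representatton of DL(p,q)}. No gaps; the universal-property bookkeeping you flag is handled adequately.
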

A useful result from the tensor product decomposition is that
\begin{equation}\label{Tensor decomposition Property 1}
\dl{p_1}{q_1}\otimes\dl{p_2}{q_2} \cong \dl{p}{q} \text{ when } p_1 + p_2 = p,\quad q_1 + q_2 = q.
\end{equation}

In context of multicomplex spaces, this decomposition captures the fact that an element of a multicomplex space $\mathbb{M}_n$ can be expressed as an element of $\mathbb{M}_k$ with coefficients from $\mathbb{M}_{n-k}$ instead of $\R$. 

\subsection{Operations of conjugation}\label{subsubsection: operations of conjugation}
We will need operations of conjugation in $\dl{p}{q}$ to give explicit expressions later on in this paper. One can think of these as a generalisation of operations of conjugation in complex numbers, split complex numbers. We defined operations of conjugation for $\dl{p}{q}$ in \cite{CommAnalCliffAlg}, and will recall the results that we need for this paper.

We define $n = p+q$ operations of conjugation in $\dl{p}{q}$ where each operation of conjugation negates one generator. More explicitly, for $l \in \firstn{n}$, we define $\conj{(.)}{l}: \dl{p}{q} \to \dl{p}{q}$ such that $\conj{U}{l} = U|_{e_l \to -e_l}$. 

In context of multicomplex spaces and commutative quaternions, operations of conjugation have been discussed. In bicomplex spaces, the $3$ operations of conjugations are well known \cite{Price,BCHolFunction,Shapiro}. For tricomplex spaces, operations of conjugation have been discussed in \cite{PlatonicSolids}. In general for multicomplex spaces, the notion of operations of conjugation is fairly new and has been discussed in \cite{MulticomplexIdeals, MSThesis}. In commutative quaternions, the operations of conjugations have been discussed in \cite{Catoni1,CommQuatMat}. One can check that for multicomplex spaces, the operations of conjugations defined above are a generalisation of operations of conjugation in bicomplex numbers and tricomplex numbers. 

We have the following results, of which only 4 \eqref{properties of operations of conjugations 4} is non-trivial. We have given a proof for that in \cite{CommAnalCliffAlg}.
\begin{lem}
\begin{enumerate}
\item The operations of conjugations are involutions\footnotemark, i.e., 
    \begin{equation}\label{properties of operations of conjugations 1}
        \conj{(\conj{U}{l})}{l} = U \text{ for all } U \in \dl{p}{q},\; l \in \firstn{n}.
    \end{equation}
    
\item The operations of conjugation are linear, i.e., 
    \begin{equation}\label{properties of operations of conjugations 2}
        \conj{(aU+bV)}{l} =a\conj{U}{l} + b\conj{V}{l} \text{ for all } U,V \in \dl{p}{q}, a,b\in \R, l \in \firstn{n}.
    \end{equation}
    
\item The operations of conjugations commute with each other, i.e., 
    \begin{equation}\label{properties of operations of conjugations 3}
        \conj{(\conj{U}{l_1})}{l_2} = \conj{(\conj{U}{l_2})}{l_1} \text{ for all } l_1, l_2 \in \firstn{n}.
    \end{equation}

\item Operations of conjugation distribute over multiplication.
    \begin{equation}\label{properties of operations of conjugations 4}
        \conj{(UV)}{l} = \conj{U}{l}\conj{V}{l} \text{ for all } U,V \in \dl{p}{q}, l \in \firstn{n}.
    \end{equation}
\end{enumerate}
\end{lem}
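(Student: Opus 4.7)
The plan is to reduce the entire lemma to the action of $\conj{(\cdot)}{l}$ on the canonical basis $\{e_A : A \subseteq \firstn{n}\}$. From the definition $\conj{U}{l} = U|_{e_l \to -e_l}$ one reads off $\conj{e_A}{l} = \epsilon^l_A \, e_A$, where $\epsilon^l_A := -1$ if $l \in A$ and $\epsilon^l_A := +1$ otherwise. In other words $\conj{(\cdot)}{l}$ is the $\R$-linear operator on $\dl{p}{q}$ that is diagonal in this basis with eigenvalues $\pm 1$.

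With this formula in hand, the first three items are essentially free. For (1), two applications on $e_A$ multiply the eigenvalue by $(\epsilon^l_A)^2 = 1$, so the operator is an involution on the basis and hence on all of $\dl{p}{q}$. Part (2) is the statement that a diagonal operator is $\R$-linear, which is immediate from the basis description. For (3), both $\conj{(\conj{e_A}{l_1})}{l_2}$ and $\conj{(\conj{e_A}{l_2})}{l_1}$ evaluate to $\epsilon^{l_1}_A \epsilon^{l_2}_A \, e_A$, so commutativity on basis elements follows from commutativity of scalar multiplication in $\R$, and extends by linearity.

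The main obstacle is (4), which is where commutativity of the generators enters in an essential way. Since the $e_i$ commute and satisfy $e_i^2 = \pm 1$, the product of two basis elements collapses to a single basis element with a sign:
\begin{equation*}
e_A \, e_B \;=\; \sigma(A,B) \, e_{A \triangle B}, \qquad \sigma(A,B) := \prod_{i \in A \cap B} e_i^2 \;\in\; \{\pm 1\},
\end{equation*}
where $A \triangle B$ is the symmetric difference. The combinatorial key is the identity
\begin{equation*}
\epsilon^l_A \, \epsilon^l_B \;=\; \epsilon^l_{A \triangle B},
\end{equation*}
which holds because $l$ lies in $A \triangle B$ iff it lies in exactly one of $A$, $B$. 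Combining the two displays gives $\conj{(e_A e_B)}{l} = \sigma(A,B) \, \epsilon^l_{A \triangle B} \, e_{A \triangle B} = \conj{e_A}{l} \, \conj{e_B}{l}$ on basis elements, and a bilinear extension using (2) then delivers (4) for arbitrary $U, V$.

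Two observations make (4) robust. First, the sign $\sigma(A,B)$ is invariant under the substitution $e_l \mapsto -e_l$ because each $e_i^2$ is, so all the sign bookkeeping in the distributive law is captured by the single factor $\epsilon^l_{A \triangle B}$. Second, this is precisely the step that would fail in the anticommutative Clifford setting, where reordering generators would introduce extra signs and $\conj{(\cdot)}{l}$ would cease to be an algebra homomorphism. Thus the content of (4) is the assertion that, for commuting generators, $\conj{(\cdot)}{l}$ is the unique $\R$-algebra endomorphism sending $e_l$ to $-e_l$ and fixing the remaining $e_i$.
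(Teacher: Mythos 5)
Your proof is correct. The paper itself does not prove this lemma in the text: it declares items (1)--(3) trivial and defers the proof of item (4) to the companion paper \cite{CommAnalCliffAlg}, so there is no in-paper argument to compare against. Your argument is the natural one: reading off $\conj{e_A}{l} = \epsilon^l_A e_A$ from the definition makes (1)--(3) immediate, and for (4) the two ingredients --- the collapse $e_A e_B = \sigma(A,B)\, e_{A\triangle B}$ with $\sigma(A,B) = \prod_{i\in A\cap B} e_i^2$, and the sign identity $\epsilon^l_A\,\epsilon^l_B = \epsilon^l_{A\triangle B}$ --- are both correct and combine exactly as you say, with bilinearity finishing the job. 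Your closing remark correctly isolates where commutativity is used: in the anticommutative Clifford case the product $e_A e_B$ acquires a reordering sign depending on $A$ and $B$ beyond $\sigma(A,B)$, but that sign is likewise unaffected by $e_l \mapsto -e_l$, so in fact the grade-involution-type maps remain homomorphisms there too; the genuinely commutative content is that \emph{every} single-generator sign flip is an algebra automorphism, which is what the lemma asserts.
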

\footnotetext{For a more detailed study on involutions in multicomplex spaces, check out \cite{InvolutionsMC}.}

Keeping in mind property 3 \eqref{properties of operations of conjugations 3}, we denote superposition of many operations of conjugations by just writing them next to each other, for example, $\conj{(\conj{(\conj{U}{1})}{2})}{3} = \conj{U}{1)(2)(3}$. We introduce the following notation for a superposition of operations of conjugation: let $A = \{i_1,i_2,\hdots, i_k\} \subseteq \firstn{n}$, then we define for $U \in \dl{p}{q}$
\begin{equation}\label{notation for superposition of operations of conjugation}
    \conj{U}{A} := \conj{U}{i_1)(i_2)\ldots(i_k}.
\end{equation}
We also define $\conj{U}{\{\}} := U$. Let $A\subseteq\firstn{n}$. We call $\conj{U}{A}$ a conjugate of $U$. 

\begin{rem}\label{operations of conjugations form an abelian group}
    Note that the set of all superpositions of operations of conjugates form an Abelian group (as mentioned in \cite{SlicesOfMandelbrodSet, MulticomplexIdeals} for the particular case of multicomplex spaces). The group operation in the group is superposition of operations of conjugation and the identity element of this group is the operation of conjugation $\conj{(.)}{\{\}}$.
\end{rem}

Also, we make an observation. The only operation of conjugation that can eliminate all terms involving the generator $e_k$ is $\conj{(.)}{k}$. We usually eliminate $e_k$ in an element $\elt{p}{q}{U}$ by multiplying U with the conjugate $\conj{U}{k}$ as presented in the following lemma.

\begin{lem}\label{properties of operations of conjugations 5}
Let $\elt{p}{q}{U}$. Then the product $U\conj{U}{k}$ doesn't have the generator $e_k$ in it.
\end{lem}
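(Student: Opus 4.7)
The plan is to use the direct-sum decomposition of $\dl{p}{q}$ according to whether a basis element contains $e_k$ or not, and then read off what $\conj{(.)}{k}$ does to each piece.

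Concretely, every basis element $e_A$ with $A\subseteq\firstn{n}$ falls into exactly one of two classes: either $k\notin A$, or $k\in A$ in which case $e_A = e_{A\setminus\{k\}}\,e_k$ (this uses commutativity of the generators, so no sign appears). Writing out a general $U\in\dl{p}{q}$ in the $e_A$ basis and grouping these two classes, I would obtain a unique decomposition
\begin{equation*}
U = V + W e_k,
\end{equation*}
where $V,W\in\dl{p}{q}$ are $\R$-linear combinations of basis elements $e_A$ with $k\notin A$, so neither $V$ nor $W$ involves $e_k$.

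Next, I apply $\conj{(.)}{k}$ to this decomposition. Since $\conj{(.)}{k}$ fixes every generator other than $e_k$ and sends $e_k\mapsto -e_k$, it fixes both $V$ and $W$, and sends $We_k$ to $-We_k$. By linearity (property \eqref{properties of operations of conjugations 2}),
\begin{equation*}
\conj{U}{k} = V - W e_k.
\end{equation*}

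Now I form the product. Using that $e_k$ commutes with every element of $\dl{p}{q}$ and that $e_k^{2}=\pm 1$ (the sign being $+1$ if $k\le p$ and $-1$ if $k>p$), a direct expansion gives
\begin{equation*}
U\conj{U}{k} = (V+We_k)(V-We_k) = V^{2} - W^{2} e_k^{2} = V^{2} \mp W^{2}.
\end{equation*}
Since $V$ and $W$ are built from basis elements not involving $e_k$, the products $V^{2}$ and $W^{2}$ also lie in the subalgebra generated by $\{e_i : i\neq k\}$, so $U\conj{U}{k}$ contains no occurrence of $e_k$. The only step that is slightly subtle is verifying that $V$ and $W$ are well defined (i.e., that the decomposition $U = V + We_k$ is unique and that $V,W$ genuinely omit $e_k$), but this follows immediately from the fact that $\{e_A : A\subseteq\firstn{n}\}$ is an $\R$-basis of $\dl{p}{q}$, so there is no real obstacle here.
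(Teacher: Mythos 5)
Your proof is correct and follows essentially the same route as the paper: both rest on writing $U = V + W e_k$ with $V,W$ free of $e_k$ and expanding $U\conj{U}{k}=(V+We_k)(V-We_k)=V^2-e_k^2W^2$. The only cosmetic difference is that the paper justifies this decomposition by invoking the tensor product isomorphism $\dl{p}{q}\cong\dl{p'}{q'}\otimes\dl{1}{0}$ (or $\otimes\,\dl{0}{1}$), whereas you obtain it directly from the basis $\{e_A\}$, which is equally valid.
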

\begin{proof}
    Let $\elt{p}{q}{U}$. Then by \eqref{Tensor decomposition Property 1}, $\dl{p}{q} \cong \dl{p^\prime}{q^\prime} \otimes \dl{1}{0} \text{ or } \dl{p^\prime}{q^\prime} \otimes \dl{0}{1}$, according to if $e_k^2 = +1 \text{ or } -1$, where $p^\prime + q^\prime = n-1$. Therefore, we can write $U$ as $U = x + ye_k$ where $x,y \in \dl{p^\prime}{q^\prime}$.\\

    \noindent Now, the product $U\conj{U}{k} = (x + ye_k)(x - ye_k) = x^2 -e_k^2y^2$. Since $x,y\in\dl{p^\prime}{q^\prime}$ implies $x^2,y^2\in\dl{p^\prime}{q^\prime}$ and since $e_k^2 = \pm1$, $U\conj{U}{k}$ doesn't have the generator $e_k$. 
\end{proof}

Note that the other operations of conjugations cannot eliminate terms involving $e_k$.

\section{A matrix representation for $\dl{p}{q}$}\label{Section: matrix rep}
Now that we have the tensor product decomposition \eqref{tensor product decomposition result}, we use it to define a representation for $\dl{p}{q}$. To do that, first we need representations of $\dl{1}{0}$ and $\dl{0}{1}$.
\subsection{Representations of $\dl{1}{0}$ and $\dl{0}{1}$}\label{faithful reps of Dl(1,0) and DL(0,1)}
The map $1 \mapsto \begin{bmatrix}
    1 & 0\\
    0 & 1\\
\end{bmatrix}$, ${e_1} \mapsto \begin{bmatrix}
    0 & 1\\
    1 & 0\\
\end{bmatrix}$ gives a faithful representation of $\dl{1}{0}$ on $\rm{Mat}(2,\R)$. A general element $a + {a_1}{e_1}$ maps to the matrix $\begin{bmatrix}
    a & a_1\\
    a_1 & a\\
\end{bmatrix}$. We denote this representation by $\beta_{+}$.

The map $1 \mapsto \begin{bmatrix}
    1 & 0\\
    0 & 1\\
\end{bmatrix}$, ${e_1} \mapsto \begin{bmatrix}
    0 & -1\\
    1 & 0\\
\end{bmatrix}$ gives a faithful representation of $\dl{0}{1}$ on $\rm{Mat}(2,\R)$. A general element $a + {a_1}{e_1}$ maps to the matrix $\begin{bmatrix}
    a & -a_1\\
    a_1 & a\\
\end{bmatrix}$. We denote this representation by $\beta_{-}$.

\begin{rem}
Note that in this paper we consider only real representations of $\dl{p}{q}$, since we want to deal with a real determinant. For example, $\dl{0}{1} \simeq \C$  also has a complex representation $1 \mapsto 1, e_1 \mapsto i$. Another example is $\dl{0}{2}$, the bicomplex space, whose complex representation is discussed in Section 2.4 of \cite{BCHolFunction}. Computing determinant from these complex representations would give a complex determinant.
\end{rem}

Next, we recall the tensor product representation of algebras.

\subsection{Tensor product representations}

\subsubsection{Tensor product of linear operators}
Let $T_1: V_1 \to W_1$ and $T_2: V_2 \to W_2$ be two linear operators. The tensor product of $T_1$ and $T_2$ is the linear map $T_1\otimes T_2 : V_1\otimes V_2 \to W_1\otimes W_2$ such that\footnote{Note that a general element of $V_1\otimes V_2$ is of the form $\sum_{i,j}{a_{ij}({v_i}\otimes{v_j}})$. Since the map $T_1\otimes T_2$ is linear, defining its action on `pure tensors' $v_i\otimes v_j$ suffices.} 
\begin{equation*}
    (T_1\otimes T_2)(v_1\otimes v_2) = (T_1(v_1))\otimes(T_2(v_2)).
\end{equation*}
for all $v_1 \in V_1$ and $v_2 \in V_2$.

If we fix basis for $V_1, W_1, V_2, W_2$, we can associate a matrix to $T_1\otimes T_2$. Let $\mathrm{A}$, $\mathrm{B}$ be the matrices corresponding to $T_1$ and $T_2$ respectively. Then the matrix corresponding to $T_1\otimes T_2$ is the Kronecker product of $\mathrm{A}$ and $\mathrm{B}$, i.e., $\mathrm{A}\otimes \mathrm{B}$. It is the block matrix\footnote{Note that sometimes the Kronecker product of two matrices $A$ and $B$ is defined to be the block matrix with blocks $a_{ij}B$ instead. This does not affect the results presented in this paper because one can reverse the ordering of the generators $\{e_1, e_2, \hdots, e_n\}$ to $\{e_n, e_{n-1}, \hdots, e_1\}$ which does not change the generated algebra $\dl{p}{q}$ but the Kronecker products, as defined in this paper, in the tensor product representation will then match with the Kronecker products defined in the alternate way.}:
\begin{equation*}
    \mathrm{A} \otimes \mathrm{B} :=
    \begin{bmatrix}
    b_{11}\mathrm{A} & b_{12}\mathrm{A} & \cdots & b_{1\delta}\mathrm{A}\\
    b_{21}\mathrm{A} & b_{22}\mathrm{A} & \cdots & b_{2\delta}\mathrm{A}\\
    \vdots & & & \vdots \\
    b_{\gamma1}\mathrm{A} & b_{\gamma2}\mathrm{A} & \cdots & b_{\gamma\delta}\mathrm{A}\\
\end{bmatrix}.
\end{equation*}

\subsubsection{Tensor product representation of algebras}

Let $A_1$ and $A_2$ be two associative algebras. Let $\gamma_1: A_1 \to \End{V_1}$ and $\gamma_2: A_2 \to \End{V_2}$ be their representations. Using $\gamma_1$ and $\gamma_2$, we can define a representation for $A_1\otimes A_2$ which we denote by $\gamma_1\otimes\gamma_2$ as follows: 
\begin{equation*}
    \gamma_1\otimes\gamma_2: A_1 \otimes A_2 \to \End{V_1\otimes V_2} \text{ , }(\gamma_1\otimes \gamma_2)(a_1\otimes a_2) = \gamma_1(a_1)\otimes\gamma_2(a_2).
\end{equation*}

\subsection{Representation of $\dl{p}{q}$}\label{Representatton of DL(p,q)}
We have described representations $\beta_+$ of $\dl{1}{0}$ and $\beta_-$ of $\dl{0}{1}$ in Section \ref{faithful reps of Dl(1,0) and DL(0,1)}. Since 
\begin{equation*}
    \dl{p}{q} \cong \underbrace{\dl{1}{0} \otimes \dl{1}{0} \cdots \otimes \dl{1}{0}}_{p \; \text{times}} \otimes \underbrace{\dl{0}{1} \otimes \dl{0}{1} \cdots \otimes \dl{0}{1}}_{q \; \text{times}},
\end{equation*}
the tensor product representation of `$p$' $\dl{1}{0}$s and `$q$' $\dl{0}{1}$s give us a representation of $\dl{p}{q}$.

Let $n := p + q$, $N := 2^n$. We define a representation $\beta_{n}: \dl{p}{q} \to \Mat(N,\R)$ as
\begin{equation}\label{The representation of K_{p,q}}
\beta_{n} := \underbrace{\beta_{+} \otimes \beta_{+} \otimes \cdots \otimes \beta_{+}}_{p \text{ times}} \otimes \underbrace{\beta_{-} \otimes \beta_{-} \otimes \cdots \otimes \beta_{-}}_{q \text{ times}}.
\end{equation}
The dimension of this representation is $N$.

Because $\beta_n$ is a tensor product representation, if $U \in \dl{p_1}{q_1}$, $V \in \dl{p_2}{q_2}$ then 
\begin{equation}\label{Representation property 2}
    \beta_{p_1+q_1+p_2+q_2}(U\otimes V) = \beta_{p_1+q_1}(U)\otimes \beta_{p_1+q_1}(V).
\end{equation}

\smallskip

Now, we express everything in terms of matrices: recall the isomorphism between $\dl{p}{q}$ and $\underbrace{\dl{1}{0} \otimes \dl{1}{0} \cdots \otimes \dl{1}{0}}_{p \; \text{times}} \otimes \underbrace{\dl{0}{1} \otimes \dl{0}{1} \cdots \otimes \dl{0}{1}}_{q \; \text{times}}$ described in Section 3.4 of \cite{CommAnalCliffAlg}:
$$1 \mapsto 1\otimes 1 \otimes \cdots \otimes 1,$$
$$e_1 \mapsto {e_1}\otimes 1 \otimes \cdots \otimes 1,$$
$$e_2 \mapsto 1\otimes {e_1} \otimes \cdots \otimes 1,$$
$$\vdots$$
$$e_j \mapsto \underbrace{1\otimes 1 \otimes \cdots \otimes 1}_{j-1 \; \text{times}} \otimes {e_1} \otimes 1 \cdots \otimes 1,$$
$$\vdots$$
$$e_n \mapsto 1\otimes 1 \otimes \cdots \otimes {e_1}.$$

Let $\mathrm{I} := \begin{bmatrix}
    1 & 0\\
    0 & 1\\
\end{bmatrix}$, $\mathrm{J}_{+}$ := $\begin{bmatrix}
    0 & 1\\
    1 & 0\\
\end{bmatrix}$ and $\mathrm{J}_{-}$ := $\begin{bmatrix}
    0 & -1\\
    1 & 0\\
\end{bmatrix}$. Using the above isomorphism, the following is $\beta_n$, the representation for $\dl{p}{q}$ expressed in terms of matrices:
$$\beta_{n}(1) \quad = \quad \mathrm{I}\otimes \mathrm{I} \otimes \cdots \otimes \mathrm{I},$$
$$\beta_{n}(e_1) \quad = \quad \mathrm{J}_\pm\otimes \mathrm{I} \otimes \cdots \otimes \mathrm{I},$$
$$\beta_{n}(e_2) \quad = \quad \mathrm{I} \otimes \mathrm{J}_\pm \otimes \cdots \otimes \mathrm{I},$$
$$\vdots$$
$$\beta_{n}(e_j) \quad = \quad \underbrace{\mathrm{I} \otimes \mathrm{I} \otimes \cdots \otimes \mathrm{I}}_{j-1 \; \text{times}} \otimes \mathrm{J}_\pm \otimes \mathrm{I} \cdots \otimes \mathrm{I},$$
$$\vdots$$
$$\beta_{n}(e_n) \quad = \quad \mathrm{I}\otimes \mathrm{I} \otimes \cdots \otimes \mathrm{J}_\pm,$$
where each $e_j$ corresponds to the Kronecker product of `$n$' $I$s except with $\mathrm{J}_+$ at $j^{th}$ spot if $e_j^2 = +1$ or $\mathrm{J}_-$ at $j^{th}$ spot if $e_j^2 = -1$. 

A nice thing about the representation $\beta_n$ is that it is faithful.

\begin{thm}
    The representation $\beta_n$ is faithful.
\end{thm}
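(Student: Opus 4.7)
The plan is to reduce faithfulness of $\beta_n$ to a linear-independence statement about Kronecker products. Since $\beta_n$ is linear and $\{e_A : A \subseteq \firstn{n}\}$ is a basis of $\dl{p}{q}$, it suffices to show that the $N = 2^n$ matrices $\{\beta_n(e_A) : A \subseteq \firstn{n}\}$ are linearly independent in $\Mat(N,\R)$.

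First I would compute $\beta_n(e_A)$ explicitly. Using the multiplicative compatibility $(A_1 \otimes B_1)(A_2 \otimes B_2) = (A_1 A_2) \otimes (B_1 B_2)$ together with the formulas for $\beta_n(e_j)$ displayed just above the theorem, multiplying out the generators gives
\begin{equation*}
\beta_n(e_A) \; = \; M^A_1 \otimes M^A_2 \otimes \cdots \otimes M^A_n,
\end{equation*}
where $M^A_j = \mathrm{I}$ if $j \notin A$, $M^A_j = \mathrm{J}_+$ if $j \in A$ and $j \le p$, and $M^A_j = \mathrm{J}_-$ if $j \in A$ and $j > p$. For each slot $j$, the pair $\{\mathrm{I}, \mathrm{J}_\pm\}$ is manifestly linearly independent in $\Mat(2,\R)$: $\mathrm{I}$ is supported on the diagonal and $\mathrm{J}_\pm$ is supported off-diagonal.

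The final step is to invoke the standard multilinear-algebra fact that the Kronecker product of linearly independent sets is linearly independent, i.e., if each $S_j \subseteq V_j$ is linearly independent then $\{x_1 \otimes \cdots \otimes x_n : x_j \in S_j\}$ is linearly independent in $V_1 \otimes \cdots \otimes V_n$. Applied to the $n$ pairs $\{\mathrm{I}, \mathrm{J}_\pm\}$ inside $\Mat(2,\R)$, this produces $2^n$ linearly independent tensors in $\Mat(2,\R)^{\otimes n} \cong \Mat(N,\R)$, which is precisely $\{\beta_n(e_A)\}$.

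The only delicate ingredient is the tensor-product independence lemma, but it is standard and admits a short induction: base case obvious, inductive step using that $\sum_i x_i \otimes y_i = 0$ with $\{y_i\}$ linearly independent forces each $x_i = 0$. Equivalently, one could organize the entire argument as an induction on $n$, with the base case handled by inspecting the explicit $2 \times 2$ matrices $\beta_\pm$ of Section \ref{faithful reps of Dl(1,0) and DL(0,1)} (where $\beta_\pm(a + a_1 e_1) = 0$ clearly forces $a = a_1 = 0$) and the inductive step using the factorization $\beta_n = \beta_{n-1} \otimes \beta_\pm$ from \eqref{The representation of K_{p,q}} together with the lemma ``tensor product of faithful representations is faithful.'' I expect this to be the cleanest way to present the argument formally, though no step poses any real obstacle.
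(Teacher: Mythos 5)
Your proposal is correct, but it takes a genuinely different route from the paper. The paper proves faithfulness by induction on $n$: it writes $U = x + y\,e_{m+1}$ with $x,y \in \dl{p'}{q'}$, observes that $\beta_{m+1}(U)$ is the block matrix with blocks $\beta_m(x)$ and $\pm\beta_m(y)$, reads off $\beta_m(x_1)=\beta_m(x_2)$ and $\beta_m(y_1)=\beta_m(y_2)$ from equality of blocks, and closes with the induction hypothesis --- essentially a concrete, matrix-level version of the ``tensor product of faithful representations is faithful'' induction you mention only as an afterthought. Your primary argument is instead global and non-inductive: you compute $\beta_n(e_A)$ as a pure Kronecker product $M^A_1\otimes\cdots\otimes M^A_n$ with $M^A_j\in\{\mathrm{I},\mathrm{J}_\pm\}$, and invoke the standard fact that Kronecker products drawn slotwise from linearly independent sets are linearly independent, so the $2^n$ matrices $\{\beta_n(e_A)\}$ are independent in $\Mat(N,\R)$ and injectivity of the linear map $\beta_n$ follows. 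Both arguments are sound; the computation of $\beta_n(e_A)$ via $(A_1\otimes B_1)(A_2\otimes B_2)=(A_1A_2)\otimes(B_1B_2)$ is correct, and the independence of $\{\mathrm{I},\mathrm{J}_\pm\}$ in each slot (diagonal versus off-diagonal support) is as immediate as you say. What your version buys is a little extra information at no extra cost --- it exhibits the image of $\beta_n$ explicitly as the $N$-dimensional span of independent Kronecker monomials, and it isolates the one nontrivial ingredient (the multilinear independence lemma) cleanly; what the paper's version buys is self-containedness, since it needs nothing beyond comparing blocks of a $2\times 2$ block matrix. One small caution if you formalize the inductive variant: the lemma you want is that the tensor product of faithful representations of \emph{two different algebras} is a faithful representation of the tensor product algebra (true, because the tensor product of injective linear maps over a field is injective), not the superficially similar and false statement about tensoring two representations of the same algebra.
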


\begin{proof}
    We give a proof using induction on $n := p + q$.
    
    Base case for induction is n = 1. We have shown in Section \ref{faithful reps of Dl(1,0) and DL(0,1)} that the matrix representation $\beta_1$ is faithful.
    
    The induction hypothesis is that the matrix representation $\beta_n$ for $\dl{p}{q}$ is faithful for $n = m$. Assuming this, we will prove that the matrix representation $\beta_n$ is faithful for $n = m+1$.
    
    Let $p + q = m + 1$. By \eqref{Tensor decomposition Property 1}, $\dl{p}{q} \cong \dl{p^\prime}{q^\prime} \otimes \dl{1}{0} \text{ or } \dl{p^\prime}{q^\prime} \otimes \dl{0}{1}$, according to if $e_{m+1}^2 = +1 \text{ or } -1$, where $p^\prime + q^\prime = m$. Let $U, V \in \dl{p}{q}$. Then because of the above isomorphism, we can express $U, V$ as $U = {x_1} + {y_1}{e_{m+1}}$, $V = {x_2} + {y_2}{e_{m+1}}$ where ${x_1}, {x_2}, {y_1}, {y_2} \in \dl{p^\prime}{q^\prime}$. Now, if $\beta_{m+1}(U) = \beta_{m+1}(V)$
    \begin{align*}
        &\implies \beta_{m+1}({x_1} + {y_1}{e_{m+1}}) = \beta_{m+1}({x_2} + {y_2}{e_{m+1}})&\\
        &\implies \beta_{m+1}({x_1}1) + \beta_{m+1}({y_1}{e_{m+1}}) = \beta_{m+1}({x_2}1) + \beta_{m+1}({y_2}{e_{m+1}})&
    \end{align*}
    By \eqref{Representation property 2}, we have that $\beta_{m+1}(\alpha\otimes \delta) = \beta_{m}({\alpha})\otimes\beta_{1}(\delta)$ for $\alpha \in \{x_1, y_1, x_2, y_2\}$ and $\delta \in \{1,e_{m+1}\}$. Thus, the above equation reduces to:
    \begin{align*}
        &\beta_{m}(x_1)\otimes \mathrm{I} + \beta_{m}(y_1)\otimes \mathrm{J}_\pm = \beta_{m}(x_2)\otimes \mathrm{I} + \beta_{m}(y_2)\otimes \mathrm{J}_\pm&\\
        &\implies 
        \begin{bmatrix}
            \beta_{m}(x_1) & \pm \beta_{m}(y_1)\\
            \beta_{m}(y_1) & \beta_{m}(x_1)
        \end{bmatrix} 
        =
        \begin{bmatrix}
            \beta_{m}(x_2) & \pm \beta_{m}(y_2)\\
            \beta_{m}(y_2) & \beta_{m}(x_2)
        \end{bmatrix}&\\
        &\implies \beta_{m}(x_1) = \beta_{m}(x_2) \text{ and } \beta_{m}(y_1) = \beta_{m}(y_2).&
    \end{align*}
    Since by our induction hypothesis, $\beta_{m}$ is faithful, we get $x_1 = x_2$ and $y_1 = y_2$ and thus $U = V$. This proves that $\beta_{m+1}$ is faithful.
\end{proof}

\subsection{Examples}
We work out the matrix representation of $\dl{2}{0}$. As $\dl{1}{0} \otimes \dl{1}{0} \cong \dl{2}{0}$, we use the matrix representation for $\dl{1}{0}$ to construct the matrix representation of $\dl{2}{0}$ by taking Kronecker product of representation matrices of $\dl{1}{0}$ as described above.

The representation is the map
$$1 \mapsto \rm{I}\otimes\rm{I} = \begin{bmatrix}
    \rm{I} & 0\\
    0 & \rm{I}\\
\end{bmatrix},$$
$${e_1} \mapsto \rm{J_+}\otimes\rm{I} = \begin{bmatrix}
    \rm{J_+} & 0\\
    0 & \rm{J_+}\\
\end{bmatrix},$$
$${e_2} \mapsto \rm{I}\otimes\rm{J_+} = \begin{bmatrix}
    0 & \rm{I}\\
    \rm{I} & 0\\
\end{bmatrix},$$
$${e_{12}} \mapsto \rm{J_+}\otimes\rm{J_+} = \begin{bmatrix}
    0 & \rm{J_+}\\
    \rm{J_+} & 0\\
\end{bmatrix},$$
i.e.,
\begin{eqnarray*}
&&a + {a_1}{e_1} + {a_2}{e_2} + {a_{12}}{e_{12}}\\
&&\mapsto \begin{bmatrix}
    a\rm{I} + {a_1}\rm{J_+} & {a_2}\rm{I} + {a_{12}}\rm{J_+}\\
    {a_2}\rm{I} + {a_{12}}\rm{J_+} & a\rm{I} + {a_1}\rm{J_+}\\
\end{bmatrix}
=
\begin{bmatrix}
    a&a_1&a_2&a_{12}\\
    a_1&a&a_{12}&a_2\\
    a_2&a_{12}&a&a_1\\
    a_{12}&a_2&a_1&a\\
\end{bmatrix}.
\end{eqnarray*}

Let us also consider the matrix representation for bicomplex numbers $\dl{0}{2}$. Similar to the matrix representation of $\dl{2}{0}$, the matrix representation of $\dl{0}{2}$ is given by 
$$1 \mapsto \rm{I}\otimes\rm{I} = \begin{bmatrix}
    \rm{I} & 0\\
    0 & \rm{I}\\
\end{bmatrix},$$
$${e_1} \mapsto \rm{J_-}\otimes\rm{I} = \begin{bmatrix}
    \rm{J_-} & 0\\
    0 & \rm{J_-}\\
\end{bmatrix},$$
$${e_2} \mapsto \rm{I}\otimes\rm{J_-} = \begin{bmatrix}
    0 & \rm{-I}\\
    \rm{I} & 0\\
\end{bmatrix},$$
$${e_{12}} \mapsto \rm{J_-}\otimes\rm{J_-} = \begin{bmatrix}
    0 & \rm{-J_-}\\
    \rm{J_-} & 0\\
\end{bmatrix},$$
i.e.,
\begin{eqnarray*}
&&a + {a_1}{e_1} + {a_2}{e_2} + {a_{12}}{e_{12}}\\
&&\mapsto \begin{bmatrix}
    a\rm{I} + {a_1}\rm{J_-} & {-a_2}\rm{I} + {-a_{12}}\rm{J_-}\\
    {a_2}\rm{I} + {a_{12}}\rm{J_-} & a\rm{I} + {a_1}\rm{J_-}\\
\end{bmatrix}
=
\begin{bmatrix}
    a&-a_1&-a_2&a_{12}\\
    a_1&a&-a_{12}&-a_2\\
    a_2&-a_{12}&a&-a_1\\
    a_{12}&a_2&a_1&a\\
\end{bmatrix}.
\end{eqnarray*}

\smallskip
One can check that the above matrix corresponding to $a + {a_1}{e_1} + {a_2}{e_2} + {a_{12}}{e_{12}}$ is precisely the Cauchy--Riemann matrix described by Price. The reason for this coincidence is that Price has defined Cauchy--Riemann matrices as the regular representation of multicomplex spaces. The way we have defined the matrix representation $\beta_n$, it is the tensor product representation of the regular representations $\beta_+$ of $\dl{1}{0}$ and $\beta_-$ of $\dl{0}{1}$. Since tensor product representation of regular representations is regular representation of tensor product algebra, we get the representation $\beta_n$ is the regular representation for $\dl{p}{q}$. In particular, for the case of multicomplex spaces $\dl{0}{n}$, we see that $\beta_n$ is the regular representation of multicomplex spaces and therefore coincides with Cauchy--Riemann matrices.

In the following sections, we will suppress the subscript $n$ in $\beta_n$ and just write $\beta$ when using the above representation.

\section{Notion of Determinant, Trace, Characteristic Polynomial}\label{Section: Notions of determinant, trace, characteristic polynomial}
Using the faithful representation $\beta$ we have defined in Section \ref{Representatton of DL(p,q)}, we can associate a `determinant' and a `trace' with elements of $\dl{p}{q}$ just like we associate a determinant and a trace with multivectors in $\cl{p}{q}$ as done in \cite{OnComputing}. Let $\beta : \dl{p}{q} \rightarrow{} \Mat(2^{p+q},\R)$ the faithful representation associated with $\dl{p}{q}$ defined in the previous section. We define {\it the determinant} as the map:
$${\rm Det}: \dl{p}{q} \rightarrow{} \R;\; \Det{U} := \det{\beta(U)},$$
and {\it the trace} as the map:
$${\rm Tr}: \dl{p}{q} \rightarrow{} \R;\; \Tr{(U)} := \tr{(\beta(U))}.$$
We also associate a characteristic polynomial with elements of $\dl{p}{q}$. Let $U \in \dl{p}{q}$. Then $\Det{U-\lambda 1}$ is a polynomial in $\lambda$ and is called {\it the characteristic polynomial associated with $U \in \dl{p}{q}$}. Since the representation $\beta$ has dimension $N = 2^{p+q}$, the characteristic polynomial associated with an element $U$ is a polynomial of degree $N$. Let $\psi_U(\lambda) = {\sum_{k=0}^{N}}{{c_k(U)}{\lambda^k}}$ be this characteristic polynomial. We call $c_k(U)$s the {\it coefficients of characteristic polynomial associated with $U$}.

Right now, the trace and determinant of an element of $\dl{p}{q}$ have to be computed by definition using the matrix representation. In the next section, we will present formulas for trace and determinant which bypass the matrix representation $\beta$ completely. 

\section{Explicit formulas for Determinant, Trace and Characteristic polynomial coefficients}\label{Explicit formulas}
In this section, we give explicit formulas for trace, determinant and other characteristic polynomial coefficients using the operations of conjugations defined in Section \ref{subsubsection: operations of conjugation}. We start with formulas for trace.

\subsection{Formulas for trace}\label{formula for trace section}
We give two formulas for trace. They are presented as the following theorems.
\begin{thm}[Formula for trace]\label{thm: formula for trace}
    An immediate consequence of the the matrix representation $\beta$ is that 
    \begin{equation}\label{formula for trace}
        \Tr{(U)} = N\proj{U}{0}
    \end{equation}
where $U \in \dl{p}{q}$ and $N:= 2^{p+q}$.
\end{thm}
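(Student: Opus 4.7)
The plan is to exploit the fact that $\beta$ is a tensor product of the $2\times 2$ representations $\beta_\pm$ and that the Kronecker product is multiplicative with respect to trace. Since the theorem is labeled as an ``immediate consequence,'' the work is just to assemble three ingredients: the explicit form of $\beta$ on basis elements, the identity $\tr(A\otimes B)=\tr(A)\tr(B)$, and linearity of both $\Tr$ and $\langle\cdot\rangle_0$.

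First I would write a general element $U\in\dl{p}{q}$ as $U=\sum_{A\subseteq\firstn{n}}u_A\,e_A$, so that $\langle U\rangle_0=u_\emptyset$, and apply linearity of $\tr$ and $\beta$:
\begin{equation*}
\Tr(U)=\tr(\beta(U))=\sum_{A\subseteq\firstn{n}}u_A\,\tr(\beta(e_A)).
\end{equation*}
Thus it suffices to evaluate $\tr(\beta(e_A))$ for each basis element $e_A$.

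Next I would use the explicit form of $\beta$ from Section \ref{Representatton of DL(p,q)}: for each $A\subseteq\firstn{n}$,
\begin{equation*}
\beta(e_A)=M_1\otimes M_2\otimes\cdots\otimes M_n,
\end{equation*}
where $M_j=\mathrm{I}$ if $j\notin A$, and $M_j\in\{\mathrm{J}_+,\mathrm{J}_-\}$ if $j\in A$ (the sign determined by the signature). Using multiplicativity of trace over the Kronecker product, $\tr(\beta(e_A))=\prod_{j=1}^n\tr(M_j)$. Since $\tr(\mathrm{I})=2$ and $\tr(\mathrm{J}_+)=\tr(\mathrm{J}_-)=0$, this product equals $2^n=N$ when $A=\emptyset$ and vanishes otherwise.

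Plugging back into the linear expansion of $\Tr(U)$, only the $A=\emptyset$ term survives, and it contributes $u_\emptyset\cdot N=N\langle U\rangle_0$, which is the claimed formula \eqref{formula for trace}. There is no real obstacle here; the only thing to keep in mind is that both signs $\mathrm{J}_\pm$ are traceless, so the argument is signature-independent and the single formula $N\langle U\rangle_0$ covers all $\dl{p}{q}$ uniformly.
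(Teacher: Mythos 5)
Your proposal is correct and follows essentially the same route as the paper's proof: expand $U$ over the basis $\{e_A\}$, use linearity of $\Tr$, the multiplicativity $\tr(A\otimes B)=\tr(A)\tr(B)$, and the fact that $\tr(\mathrm{J}_\pm)=0$ so that only the $A=\emptyset$ term contributes $N\proj{U}{0}$. No differences worth noting.
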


This is because the only basis vector having non-zero trace is $1$. All other basis vectors have trace $0$. 
\begin{proof}
    First we have a result from matrix theory. Let $\mathrm{A}$ and $\mathrm{B}$ be two matrices. Then $\tr(A\otimes B) = \tr(A)\cdot\tr(B)$. In general, $\tr(A_1\otimes A_2\otimes \cdots \otimes A_n) = \tr(A_1)\cdot\tr(A_2)\cdots\tr(A_n)$.
    
    Since the basis vectors $e_{i_1 i_2 \cdots i_k}$ maps to the Kronecker product of $n$ $\mathrm{I}$s except with $\mathrm{J}_\pm$ at the $i_1, i_2, \hdots ,i_k$ positions, and $\tr(\mathrm{J}_\pm) = 0$, it follows that $\Tr(e_{i_1 i_2 \cdots i_k}) = 0$.
    
    Let
    \begin{multline*}
        U = u + \sum_{1 \le i \le n} u_ie_i + \!\!\sum_{1 \le i_1 < i_2 \le n}\!\! u_{i_1 i_2}e_{{i_1} {i_2}} + \quad\cdots\quad + \!\!\!\!\!\!\!\!\!\!\\ \sum_{1 \le i_i < i_2 < \cdots < i_{n-1} \le n}\!\!\!\!\!\!\!\!\!\! u_{i_1 i_2 \cdots i_{n-1}}e_{i_1 i_2 \cdots i_{n-1}} + u_{12\cdots n}e_{12\cdots n} \in \dl{p}{q}.
    \end{multline*}
    Since trace is a linear operator on matrices, $\Tr$ is linear operator on $\dl{p}{q}$. Therefore,
    \begin{align*}
        \Tr{(U)} &= \Tr\left(u + \sum_{1 \le i \le n} u_ie_i + \sum_{1 \le i_1 < i_2 \le n} u_{i_1 i_2}e_{{i_1} {i_2}} + \cdots +\right.\\
        &\left.\qquad \sum_{1 \le i_i < i_2 < \cdots < i_{n-1} \le n} u_{i_1 i_2 \cdots i_{n-1}}e_{i_1 i_2 \cdots i_{n-1}} + u_{12\cdots n}e_{12\cdots n}\right)\\
        &= \Tr(u)
        = u\cdot\Tr(1)
        = u\cdot N
        = \proj{U}{0}\cdot N. \qedhere
    \end{align*}
\end{proof}

Now we present another formula for trace which follows from the definition of operations of conjugation and simple combinatorics.
\begin{thm}[Another formula for Trace]\label{thm: another formula for trace}
    Trace of an element $\elt{p}{q}{U}$ is given by sum of all of its conjugates i.e.,
    \begin{equation}\label{formula for trace 2}
        \Tr{(U)} = \sum_{A\subseteq \firstn{n}} {\conj{U}{A}}.
    \end{equation}
\end{thm}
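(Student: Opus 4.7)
The plan is to compute the right-hand side on each basis element $e_B$ and then use linearity of conjugation together with the previous theorem $\Tr(U) = N \langle U \rangle_0$.

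First I would observe that for any $A, B \subseteq \firstn{n}$, the definition of the conjugations gives
\[
e_B^{(A)} = (-1)^{|A \cap B|}\, e_B,
\]
since each generator $e_i$ with $i \in A$ that appears in $e_B$ contributes a sign flip, and $i \in A$ appears in $e_B$ precisely when $i \in A \cap B$. Summing over all $A \subseteq \firstn{n}$ therefore yields
\[
\sum_{A \subseteq \firstn{n}} e_B^{(A)} = e_B \sum_{A \subseteq \firstn{n}} (-1)^{|A \cap B|}.
\]

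Next I would evaluate the inner combinatorial sum by splitting $A$ into its intersections with $B$ and with the complement $\firstn{n}\setminus B$. Writing $A = A_1 \sqcup A_2$ with $A_1 \subseteq B$ and $A_2 \subseteq \firstn{n}\setminus B$, I get $|A \cap B| = |A_1|$, so
\[
\sum_{A \subseteq \firstn{n}} (-1)^{|A \cap B|} = \Bigl(\sum_{A_1 \subseteq B} (-1)^{|A_1|}\Bigr)\Bigl(\sum_{A_2 \subseteq \firstn{n}\setminus B} 1\Bigr) = \Bigl(\sum_{A_1 \subseteq B} (-1)^{|A_1|}\Bigr)\,2^{n-|B|}.
\]
If $B \ne \emptyset$, the first factor vanishes by the binomial identity $\sum_{k=0}^{|B|} \binom{|B|}{k}(-1)^k = 0$; if $B = \emptyset$, the first factor equals $1$ and the whole expression equals $2^n = N$. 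Thus $\sum_A e_B^{(A)} = N$ when $B = \emptyset$ and $0$ otherwise.

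Finally, I would expand a general $U = \sum_{B \subseteq \firstn{n}} u_B e_B$ and use linearity of each conjugation (property \eqref{properties of operations of conjugations 2}):
\[
\sum_{A \subseteq \firstn{n}} U^{(A)} = \sum_{B \subseteq \firstn{n}} u_B \sum_{A \subseteq \firstn{n}} e_B^{(A)} = u_\emptyset \cdot N = N \proj{U}{0},
\]
which equals $\Tr(U)$ by Theorem \ref{thm: formula for trace}. The whole argument is essentially combinatorial with no real obstacle; the only subtle point to state cleanly is the sign identity $e_B^{(A)} = (-1)^{|A \cap B|} e_B$, which one can justify either by direct inspection of how each $\conj{(.)}{i}$ acts on $e_B$ or, more formally, by induction on $|A|$ using commutativity of the conjugations \eqref{properties of operations of conjugations 3}.
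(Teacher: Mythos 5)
Your proposal is correct and follows essentially the same route as the paper's own proof: both evaluate the sum of conjugates on each basis element $e_B$, show by a parity-counting argument that all terms with $B\neq\emptyset$ cancel while the scalar part is multiplied by $N$, and then invoke $\Tr(U)=N\proj{U}{0}$. Your use of the binomial identity $\sum_k\binom{|B|}{k}(-1)^k=0$ is just a cleaner packaging of the paper's observation that, for fixed nonempty $B$, exactly half of the subsets $A$ meet $B$ in an even number of elements.
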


\begin{proof}
    Let
    \begin{multline*}
        U = u + \sum_{1 \le i \le n} u_ie_i + \!\!\sum_{1 \le i_1 < i_2 \le n}\!\! u_{i_1 i_2}e_{{i_1} {i_2}} + \quad\cdots\quad + \!\!\!\!\!\!\!\!\!\!\\ \sum_{1 \le i_i < i_2 < \cdots < i_{n-1} \le n}\!\!\!\!\!\!\!\!\!\! u_{i_1 i_2 \cdots i_{n-1}}e_{i_1 i_2 \cdots i_{n-1}} + u_{12\cdots n}e_{12\cdots n} \in \dl{p}{q}.
    \end{multline*}
    Let $A = \{i_1,i_2,\hdots, i_k\}\subseteq\firstn{n}$. Then $e_A$ is a basis element of $\dl{p}{q}$. Let $B\subseteq\firstn{n}$. Then $\conj{e_A}{B} = e_A$ if $A\cap B$ has an even number of elements and $\conj{e_A}{B} = -e_A$ if $A\cap B$ has an odd number of elements. Now, if we fix $A$, there are exactly $2^{N-1}$ subsets $B$ such that $A\cap B$ has an even number of elements and there are exactly $2^{N-1}$ subsets $B$ such that $A\cap B$ has an odd number of elements. Therefore, in the sum $\sum_{A\subseteq \firstn{n}} {\conj{U}{A}}$, all the basis elements $e_A$ cancel out. The only terms that survive in the sum is $u$ from each conjugate $\conj{U}{A}$. Therefore,
    \begin{align*}
        \sum_{A\subseteq \firstn{n}} {\conj{U}{A}} = Nu
        = N\proj{U}{0}
        =\Tr{(U)}.\tag*{\qedhere}
    \end{align*}
\end{proof}

\subsection{Formula for determinant}
We have defined the determinant of an element of $\dl{p}{q}$ using the matrix representation $\beta$. Every time we want to compute the determinant of an element $U \in \dl{p}{q}$, we would need to compute first $\beta(U)$ and then its determinant which is cumbersome. We want to find a formula which directly gives us the determinant of an element without computing its matrix representation. Such explicit formula for determinant exists, we present it in subsequent sections.

\subsubsection{Inspiration from examples with small $n$}\label{determinant in small n DL(p,q)}
Consider $\dl{0}{1}$ -- the complex numbers. Using the faithful representation defined in Section \ref{Representatton of DL(p,q)}, we compute the determinant. Let $U = a + {a_1}{e_1} \in \dl{0}{1}$. Then $\beta(U) = \begin{bmatrix}
    a & -a_1\\
    a_1 & a
\end{bmatrix}$. Finally, $\det{\beta(U)} = a^2 + {a_1}^2 = \Det{U}$. It is not hard to see that this is the good old modulus of a complex number which can also be expressed as $U\conj{U}{1}$. Therefore, $\Det{U} = U\conj{U}{1}$.

Consider $\dl{1}{0}$ -- the split-complex numbers. Using the faithful representation defined in Section \ref{Representatton of DL(p,q)}, we compute the determinant. Let $U = a + {a_1}{e_1} \in \dl{1}{0}$. Then $\beta(U) = \begin{bmatrix}
    a & a_1\\
    a_1 & a
\end{bmatrix}$. Finally, $\det{\beta(U)} = a^2 - {a_1}^2 = \Det{U}$. It is not hard to see that this is the norm of a split-complex number which can also be expressed as $U\conj{U}{1}$. Therefore, $\Det{U} = U\conj{U}{1}$.

Similarly, let us also consider $\dl{1}{1}$, the commutative quaternions. Using the faithful representation defined in Section \ref{Representatton of DL(p,q)}, we compute the determinant. Let $U = a + {a_1}{e_1} + {a_2}{e_2} + {a_{12}}{e_{12}} \in \dl{1}{1}$. Then 
$$\beta(U) = \begin{bmatrix}
    a & a_1 & -a_2 & -a_{12} \\
    a_1 & a & -a_{12} & -a_2 \\
    a_2 & a_{12} & a & a_1 \\
     a_{12} & a_2 & a_1 & a \\
\end{bmatrix}.$$ Finally, $\Det{U} = \det{\beta(U)} = ((a+{a_1})^2+({a_2} + a_{12})^2)((a-{a_1})^2+({a_2} - a_{12})^2)$ $= U\conj{U}{1}\conj{U}{2}\conj{U}{1)(2}$. The right-hand side of this expression has already been used to determine the norm of commutative quaternions in \cite{CommQuatMat, Catoni1}.

We were able to express the determinant of elements of $\dl{1}{0}$, $\dl{0}{1}$ and $\dl{1}{1}$ explicitly, without involving their matrix representations and bypassing them completely, only using operations of conjugation. This tells us that maybe we will be able to do the same in the general setting. We will see that this is indeed true and the general formula for the determinant will use the general operations of conjugations defined for $\dl{p}{q}$ in Section \ref{subsubsection: operations of conjugation}.

\subsubsection{The formula for determinant}\label{Formula for determinant section}
\begin{thm}\label{thm formula for determinant}
    Let $p,q \in \Z_{\ge0}, p+q = n$. Let $U \in \dl{p}{q}$. Then
    \begin{equation}\label{formula for determinant}
        \Det{U} = U\conj{U}{1}\conj{U}{2}\cdots\conj{U}{1)(2}\cdots\conj{U}{1)(2)(3)\cdots(n}.
    \end{equation}
\end{thm}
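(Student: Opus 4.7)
The plan is induction on $n = p + q$, leveraging the tensor product decomposition \eqref{Tensor decomposition Property 1} and the block structure of $\beta_n(U)$ coming from the Kronecker product definition of $\beta_n$. The base case $n = 1$ was verified directly in Section \ref{determinant in small n DL(p,q)}: in both $\dl{0}{1}$ and $\dl{1}{0}$ one has $\Det{U} = U \conj{U}{1}$.

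For the inductive step, fix $U \in \dl{p}{q}$ with $p + q = n$ and assume the formula for all $\dl{p'}{q'}$ with $p' + q' = n - 1$. By \eqref{Tensor decomposition Property 1} we identify $\dl{p}{q}$ with $\dl{p'}{q'} \otimes \dl{1}{0}$ or $\dl{p'}{q'} \otimes \dl{0}{1}$ (according to the sign of $e_n^2$) and write $U = x + y e_n$ with $x, y \in \dl{p'}{q'}$. Unwinding the Kronecker product definition of $\beta_n$ exactly as in the proof of faithfulness yields
$$\beta_n(U) = \begin{bmatrix} \beta_{n-1}(x) & \pm \beta_{n-1}(y) \\ \beta_{n-1}(y) & \beta_{n-1}(x) \end{bmatrix},$$
where the upper-right sign agrees with $e_n^2 = \pm 1$. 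The blocks $\beta_{n-1}(x)$ and $\beta_{n-1}(y)$ commute as matrices because they are images, under an algebra representation, of elements of the commutative algebra $\dl{p'}{q'}$, so the standard determinant formula for a pairwise-commuting block matrix gives
$$\Det{U} = \det{\beta_{n-1}(x)^2 - e_n^2 \beta_{n-1}(y)^2} = \det{\beta_{n-1}(x^2 - e_n^2 y^2)} = \Det{U \conj{U}{n}},$$
where in the last step I use $U \conj{U}{n} = (x + y e_n)(x - y e_n) = x^2 - e_n^2 y^2$ and invoke Lemma \ref{properties of operations of conjugations 5} to see that $U \conj{U}{n}$ lies in $\dl{p'}{q'}$, so the final determinant is the one defined for that subalgebra.

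The inductive hypothesis applied to $U \conj{U}{n}$, combined with distributivity \eqref{properties of operations of conjugations 4} and commutativity \eqref{properties of operations of conjugations 3} of conjugation, then gives
$$\Det{U} = \prod_{B \subseteq \firstn{n-1}} \conj{(U \conj{U}{n})}{B} = \prod_{B \subseteq \firstn{n-1}} \conj{U}{B}\, \conj{U}{B \cup \{n\}},$$
and the bijection $B \leftrightarrow \{B, B \cup \{n\}\}$ between subsets of $\firstn{n-1}$ and the partition of the subsets of $\firstn{n}$ according to whether or not they contain $n$ converts this into $\prod_{A \subseteq \firstn{n}} \conj{U}{A}$, matching \eqref{formula for determinant}. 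The main technical point is the block-determinant identity together with the bookkeeping that unifies the cases $e_n^2 = +1$ and $e_n^2 = -1$ into the single formula $x^2 - e_n^2 y^2$; once that is in hand, the induction step is routine.
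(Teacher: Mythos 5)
Your proof is correct and follows essentially the same route as the paper: induction on $n$ via the decomposition $U = x + y e_n$, the block form of $\beta_n(U)$, and the commuting-block determinant identity yielding $\Det{U} = \Det{U\conj{U}{n}}$. The only difference is that you explicitly unwind the recursion into the product $\prod_{A \subseteq \firstn{n}} \conj{U}{A}$ using \eqref{properties of operations of conjugations 4} and the subset bijection, a step the paper merely asserts is ``equivalent to formula \eqref{formula for determinant}.''
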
 
In the above formula, the right-hand side is the product of all conjugates of $U$. In total, there are $N = 2^n$ factors in right-hand side corresponding to total number of different superpositions of $n$ operations of conjugation defined in Section \ref{subsubsection: operations of conjugation}. Using the notation introduced in \eqref{notation for superposition of operations of conjugation}, we can express the formula for determinant as
\begin{equation}\label{formula for determinant 2}
    \Det{U} = {\prod_{A \subseteq \firstn{n}}}{\conj{U}{A}}.
\end{equation}

The idea behind the proof is inspired from the examples considered previously: eliminate all non-zero grades by multiplication with conjugates. Let $p,q \in \Z_{\ge0}, p+q = n$. Let $\mathrm{Det}_{n}(U)$ denote the determinant of $U \in \dl{p}{q}$ given by formula \eqref{formula for determinant}. The $n$ in the subscript of $\mathrm{Det}_n$ denotes that $n$ generators are getting negated using operations of conjugation. We will prove that $\mathrm{Det}_{n}(U) = \mathrm{Det}_{n-1}(U\conj{U}{n})$, here $\mathrm{Det}_{n-1}(.)$ denotes the formula for determinant in $\dl{p^\prime}{q^\prime}$ with $p^\prime + q^\prime = n-1$. This expression makes sense because by Lemma \ref{properties of operations of conjugations 5}, $U\conj{U}{n}\in\dl{p^\prime}{q^\prime}$ Note that this is equivalent to the formula \eqref{formula for determinant}. In context of multicomplex spaces, Price mention this as a conjecture that Theorem 46.19 and Theorem 46.22 in his book hold in general for all multicomplex spaces. 

\begin{proof}[Proof of Theorem \ref{thm formula for determinant}]
    It follows from what we did in Section \ref{determinant in small n DL(p,q)} that the formula for determinant in $\C\otimes\dl{p}{q}$, where $p+q=1$, is given by the expression in \eqref{formula for determinant}. Our induction hypothesis is that formula \eqref{formula for determinant} holds for $n = m$. Assuming this, we will show that the formula then holds for $n = m+1$.
    
    Using \eqref{Tensor decomposition Property 1}, $\dl{p}{q} \cong \dl{p^\prime}{q^\prime} \otimes \dl{1}{0} \text{ or } \dl{p^\prime}{q^\prime} \otimes \dl{0}{1}$, according to if $e_{m+1}^2 = +1 \text{ or } -1$, where $p^\prime + q^\prime = m$. Let $U \in \dl{p}{q}$. Then because of the above isomorphism, we can express $U, V$ as $U = x + y{e_{m+1}}$, where $x, y \in \dl{p^\prime}{q^\prime}$. Using \eqref{Representation property 2},
    \begin{align*}
        \beta_{m+1}(U) &= \beta_{m+1}(x + y{e_{m+1}})\\
        &= \beta_{m+1}(x1) + \beta_{m+1}(ye_{m+1})\\
        &= \beta_m(x)\otimes\beta_1(1) + \beta_m(y)\otimes\beta_1(e_{m+1}) = \beta_m(x)\otimes \mathrm{I} + \beta_m(y)\otimes\mathrm{J}_\pm\\ 
        &= \begin{bmatrix}
            \beta_m(x) & \pm\beta_m(y)\\
            \beta_m(y) & \beta_m(x)
        \end{bmatrix}.
    \end{align*}  
    \smallskip
    When $A,B,C,D$ commute, $\mathrm{det}\left(\begin{bmatrix}
        A & B\\
        C & D
    \end{bmatrix}\right) = \mathrm{det}(AD-BC)$ (check out \cite{DetBlockMatrix} for details). Since $\beta_m(x), \beta_m(y)$ commute, we get 
    \begin{align*}
        \mathrm{Det}_{m+1}(U) = \mathrm{det}\left(\begin{bmatrix}
        \beta_m(x) & \pm\beta_m(y)\\
        \beta_m(y) & \beta_m(x)
        \end{bmatrix}\right)
        = \mathrm{det}(\beta_m(x)^2 \mp \beta_m(y)^2)\\
        = \mathrm{det}(\beta_m(x^2\mp y^2))
        = \mathrm{Det}(x^2\mp y^2)
        = \mathrm{Det}(U\conj{U}{m+1})
    \end{align*}

    Therefore, we see that $\mathrm{Det}_{m+1}(U) = {\mathrm{Det}_m}(U\conj{U}{m+1})$ holds which is equivalent to formula \eqref{formula for determinant} as discussed above. This completes the proof.
\end{proof} 

\subsubsection{Interesting observations about the determinant}
The formula \eqref{formula for determinant} is very elegant and interesting. The first interesting thing is that the formula  for determinant is independent of signature $(p,q)$. Also, it follows that $U$ and all its conjugates have the same determinant.
\begin{thm}\label{all conjugates of U have same determinant theorem}
    Let $\elt{p}{q}{U}$, then
    \begin{equation}\label{all conjugates of U have same determinant}
        \Det{\conj{U}{A}} = \Det{U} \text{ for all } A \subseteq \firstn{n}.
    \end{equation}
\end{thm}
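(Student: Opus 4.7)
The plan is to use the explicit formula for the determinant \eqref{formula for determinant 2} together with the Abelian group structure of the operations of conjugation observed in Remark \ref{operations of conjugations form an abelian group}. By \eqref{formula for determinant 2},
\[
\Det{\conj{U}{A}} = \prod_{B \subseteq \firstn{n}} \conj{\left(\conj{U}{A}\right)}{B},
\]
so the statement will reduce to showing that the family of conjugates $\{\conj{(\conj{U}{A})}{B} : B \subseteq \firstn{n}\}$ is simply a reindexing of $\{\conj{U}{C} : C \subseteq \firstn{n}\}$.

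To do this, I would first identify the group of superposed conjugations with the power set $2^{\firstn{n}}$ under symmetric difference $\triangle$. Concretely, using properties \eqref{properties of operations of conjugations 1} and \eqref{properties of operations of conjugations 3}, any generator $e_l$ with $l \in A \cap B$ gets negated twice in $\conj{(\conj{U}{A})}{B}$ and is restored, while generators with $l \in A \triangle B$ are negated exactly once. Hence
\[
\conj{\left(\conj{U}{A}\right)}{B} = \conj{U}{A \triangle B}.
\]

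Next, I would observe that for fixed $A$, the map $B \mapsto A \triangle B$ is a bijection of $2^{\firstn{n}}$ onto itself (it is an involution, since $A \triangle (A \triangle B) = B$). Therefore
\[
\prod_{B \subseteq \firstn{n}} \conj{U}{A \triangle B} = \prod_{C \subseteq \firstn{n}} \conj{U}{C} = \Det{U},
\]
which gives the desired equality. There is no serious obstacle here; the only subtlety is justifying the symmetric-difference identity for superposed conjugations, and this is immediate from the involution and commutativity properties \eqref{properties of operations of conjugations 1} and \eqref{properties of operations of conjugations 3}. Everything else is simply rearranging the finite product.
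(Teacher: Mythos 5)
Your proposal is correct and follows essentially the same route as the paper: apply the product formula \eqref{formula for determinant 2} and use the Abelian group structure of the conjugations (Remark \ref{operations of conjugations form an abelian group}) to see that conjugating by $A$ merely permutes the factors of the product. Your explicit identification of the group with $2^{\firstn{n}}$ under symmetric difference just makes precise the paper's remark that $(B)(A)$ ranges over a permutation of the $(A)$'s.
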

\begin{proof}
    To prove this, we use the formula \eqref{formula for determinant 2} for determinant:
     \begin{equation*}
         \Det{U} = {\prod_{A \subseteq \firstn{n}}}{\conj{U}{A}}.
     \end{equation*}
     Recall from remark \ref{operations of conjugations form an abelian group} that the set of all operations of conjugation forms a group under superposition of operations of conjugation. Once we make this observation, the result becomes obvious because 
     \begin{equation}\label{proof that all conjugates of U have same determinant}
         \Det{\conj{U}{B}} = {\prod_{A \subseteq \firstn{n}}}{\conj{U}{B)(A}}.
     \end{equation}
     and since $(B)(A)$ is a permutation of elements $(A)$ and the product \eqref{proof that all conjugates of U have same determinant} has the same terms as in the product \eqref{formula for determinant 2}.
\end{proof}

\subsection{Formulas for coefficients of characteristic polynomials}
Recall that the characteristic polynomial associated with $U \in \dl{p}{q}$ is the polynomial in $\lambda$, $\psi_U(\lambda) = \Det{U-\lambda 1}$. Now that we have the formula for determinant \eqref{formula for determinant}, we can use it to find the formulas for characteristic polynomial coefficients. First, we present some interesting observations about characteristic polynomials. 

Let $p,q\in\Z_{\ge0}, n:=p+q$. Let $\elt{p}{q}{U}$. Let $\psi_U(\lambda) = \Det{U-\lambda1}$ denote the characteristic polynomial of U.
\begin{thm}\label{characteristic polynomial of all conjugates is same}
    Let $\elt{p}{q}{U}$, $A\subseteq\firstn{n}$. Then $\psi_{\conj{U}{A}}(\lambda) = \psi_U(\lambda)$.
\end{thm}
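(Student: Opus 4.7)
The plan is to obtain this essentially for free from the previous Theorem \ref{all conjugates of U have same determinant theorem}, which already tells us that $\Det{\conj{V}{A}} = \Det{V}$ for every $V \in \dl{p}{q}$ and every $A \subseteq \firstn{n}$. The idea is simply to apply that result to the element $V := U - \lambda 1$, which lives in $\dl{p}{q}$ for every fixed $\lambda \in \R$, and then compare the two sides.

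The key observation I will record first is how the conjugation $\conj{(\cdot)}{A}$ interacts with the element $\lambda 1$. By linearity \eqref{properties of operations of conjugations 2}, and because $1 = e_{\{\}}$ contains none of the generators $e_i$ and is therefore fixed by every operation $\conj{(\cdot)}{l}$ (and hence by any superposition $\conj{(\cdot)}{A}$), we have $\conj{(\lambda 1)}{A} = \lambda 1$. Consequently
\begin{equation*}
\conj{(U - \lambda 1)}{A} = \conj{U}{A} - \lambda 1.
\end{equation*}

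With that in hand, the proof collapses to a single chain of equalities. Starting from the definition of the characteristic polynomial,
\begin{equation*}
\psi_{\conj{U}{A}}(\lambda) = \Det{\conj{U}{A} - \lambda 1} = \Det{\conj{(U-\lambda 1)}{A}} = \Det{U - \lambda 1} = \psi_U(\lambda),
\end{equation*}
where the middle equality is the observation above and the third equality is Theorem \ref{all conjugates of U have same determinant theorem} applied to $V = U-\lambda 1$. Since this holds for every $\lambda \in \R$, the two polynomials in $\lambda$ coincide, and in particular each coefficient $c_k(\conj{U}{A})$ equals $c_k(U)$.

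There is really no obstacle here; the only thing one must be careful about is the status of $\lambda$. I treat $\lambda$ as a real parameter so that $U - \lambda 1$ is an honest element of $\dl{p}{q}$ and the formula \eqref{formula for determinant 2} applies; the polynomial identity then follows because two real polynomials that agree on all of $\R$ are equal. If one prefers, one can equivalently expand $\Det{U-\lambda 1} = \prod_{A\subseteq\firstn{n}} \conj{(U-\lambda 1)}{A} = \prod_{A\subseteq\firstn{n}}(\conj{U}{A} - \lambda 1)$ and note that relabelling $A \mapsto B \triangle A$ (using that superposition of conjugations forms an abelian group, Remark \ref{operations of conjugations form an abelian group}) permutes the factors, so replacing $U$ by $\conj{U}{B}$ does not change the product.
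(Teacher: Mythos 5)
Your proof is correct and follows essentially the same route as the paper: both apply Theorem \ref{all conjugates of U have same determinant theorem} to the element $U-\lambda 1$ and use $\conj{(U-\lambda 1)}{A} = \conj{U}{A}-\lambda 1$. Your extra care in treating $\lambda$ as a real parameter and concluding equality of polynomials from agreement on all of $\R$ is a welcome but inessential refinement.
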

\begin{proof}
    From Theorem \ref{all conjugates of U have same determinant theorem}, we know that all conjugates of $U$ have the same determinant, implying that they have the same characteristic polynomial as well. Explicitly, let $A\subseteq\firstn{n}$, then $\Det{U-\lambda1} = \Det{\conj{(U-\lambda1)}{A}} = \Det{\conj{U}{A}-\lambda1}$ implying that $\psi_{\conj{U}{A}}(\lambda) = \psi_U(\lambda)$ for all $A\subseteq\firstn{n}$.
\end{proof}

\begin{cor}
    Let $\elt{p}{q}{U}$. Then $\conj{U}{A}$ satisfies the characteristic equation of $U$ i.e., $\psi_U(\conj{U}{A}) = 0$ for all $A\subseteq\firstn{n}$.
\end{cor}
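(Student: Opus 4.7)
The plan is to deduce this corollary from the classical Cayley--Hamilton theorem applied to the matrix $\beta(\conj{U}{A})$, combined with Theorem \ref{characteristic polynomial of all conjugates is same} and the faithfulness of the representation $\beta$ established in Section \ref{Representatton of DL(p,q)}.

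First I would invoke Theorem \ref{characteristic polynomial of all conjugates is same}, which tells us $\psi_{\conj{U}{A}}(\lambda) = \psi_U(\lambda)$ for every $A \subseteq \firstn{n}$. Because the characteristic polynomial $\psi_{\conj{U}{A}}(\lambda) = \det(\beta(\conj{U}{A}) - \lambda \mathrm{I})$ coincides (up to the standard sign convention) with the characteristic polynomial of the matrix $\beta(\conj{U}{A})$ in the usual sense of matrix theory, the Cayley--Hamilton theorem applied to $\beta(\conj{U}{A})$ gives
\begin{equation*}
\psi_{\conj{U}{A}}(\beta(\conj{U}{A})) = 0 \in \Mat(N,\R).
\end{equation*}
Combining this with the previous equality yields $\psi_U(\beta(\conj{U}{A})) = 0$.

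Next I would transfer this identity back to $\dl{p}{q}$ by using that $\beta$ is an $\R$-algebra homomorphism. Writing $\psi_U(\lambda) = \sum_{k=0}^{N} c_k(U)\lambda^k$ with real coefficients $c_k(U)$, the homomorphism property together with $\beta(1) = \mathrm{I}$ gives
\begin{equation*}
\beta\!\left(\psi_U(\conj{U}{A})\right) = \sum_{k=0}^{N} c_k(U)\,\beta(\conj{U}{A})^k = \psi_U(\beta(\conj{U}{A})) = 0.
\end{equation*}
Finally, since $\beta$ is faithful (hence injective), we conclude $\psi_U(\conj{U}{A}) = 0$ in $\dl{p}{q}$, as required.

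There is really no substantial obstacle here once the infrastructure is in place: the whole argument is a clean three-step chain (Theorem \ref{characteristic polynomial of all conjugates is same} $\Rightarrow$ Cayley--Hamilton on the matrix $\beta(\conj{U}{A})$ $\Rightarrow$ pull back via the faithful homomorphism $\beta$). The only point that deserves a brief sentence in the write-up is the reminder that $\psi_U$ is defined via a determinant of a matrix over $\R$, so its coefficients are real numbers and classical Cayley--Hamilton applies verbatim; plugging an element of $\dl{p}{q}$ into $\psi_U$ is unambiguous because $\dl{p}{q}$ is a commutative $\R$-algebra containing $\R$, and the evaluation commutes with $\beta$.
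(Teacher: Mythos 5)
Your proof is correct and follows essentially the same route as the paper: invoke Cayley--Hamilton, combine it with Theorem \ref{characteristic polynomial of all conjugates is same}, and conclude. The only difference is that you explicitly carry out the transfer between $\Mat(N,\R)$ and $\dl{p}{q}$ via the faithful homomorphism $\beta$, a step the paper leaves implicit when it asserts $\psi_U(U)=0$ directly in the algebra.
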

\begin{proof}
    By the Cayley--Hamilton Theorem, $U$ satisfies its characteristic equation i.e., $\psi_U{(U)} = 0$. Since from the above Theorem \ref{characteristic polynomial of all conjugates is same}, all conjugates of $U$ have the same characteristic polynomial i.e., $\psi_{\conj{U}{A}}(\lambda) = \psi_U(\lambda)$, it implies that $\psi_U(\conj{U}{A}) = 0$ for all $A\subseteq\firstn{n}$.
\end{proof}

\subsubsection{Explicit formulas for coefficients of characteristic polynomials}
Let $\psi_U(\lambda) = {\sum_{k=0}^{N}}{{c_k(U)}{\lambda^k}}$ be the characteristic polynomial where $c_k(U)$s are the coefficients of the characteristic polynomial as defined in Section \ref{Section: Notions of determinant, trace, characteristic polynomial}. We present explicit formulas for them as the following theorem.

\begin{thm}\label{thm formula for characteristic polynomial coefficients}
Let $\psi_U(\lambda) = {\sum_{k=0}^{N}}{{c_k(U)}{\lambda^k}}$ be the characteristic polynomial of $U \in \dl{p}{q}$. Then
\begin{equation}\label{formula for characteristic polynomial coefficients}
        \begin{aligned}
            &c_N(U) = (-1)^N = 1,\\
            &c_k(U) = (-1)^{k}\sum_{A_1,A_2,\hdots,A_{N-k} \subseteq \firstn{n}}\left(\prod_{1\le i \le N-k}\conj{U}{A_i}\right) \text{ for } 0\le k<N,
        \end{aligned}
    \end{equation}
    where the sum is over all distinct subsets of $\firstn{n}$ $N-k$ at a time. 
\end{thm}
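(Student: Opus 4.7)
\medskip

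\noindent\textbf{Proof proposal.} The plan is to reduce the statement to an expansion of the determinant formula (Theorem \ref{thm formula for determinant}) applied to $U - \lambda 1$, and then identify coefficients of $\lambda^k$ with elementary symmetric polynomials in the conjugates of $U$.

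First I would apply Theorem \ref{thm formula for determinant} directly to the element $U - \lambda 1 \in \dl{p}{q}$, which yields
\begin{equation*}
\psi_U(\lambda) \;=\; \Det{U-\lambda1} \;=\; \prod_{A\subseteq\firstn{n}} (U - \lambda 1)^{(A)}.
\end{equation*}
Using linearity of the operations of conjugation (\eqref{properties of operations of conjugations 2}) together with the obvious fact that $1^{(A)} = 1$ for every $A \subseteq \firstn{n}$, each factor simplifies to $\conj{U}{A} - \lambda 1$, so
\begin{equation*}
\psi_U(\lambda) \;=\; \prod_{A\subseteq\firstn{n}} \bigl(\conj{U}{A} - \lambda\bigr).
\end{equation*}

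Next I would expand this product as a polynomial in $\lambda$. There are exactly $N = 2^n$ factors, and the standard identity for a product of linear factors gives
\begin{equation*}
\prod_{A\subseteq\firstn{n}} \bigl(\conj{U}{A} - \lambda\bigr) \;=\; \sum_{S \subseteq 2^{\firstn{n}}} (-\lambda)^{N - |S|} \prod_{A \in S} \conj{U}{A}.
\end{equation*}
Grouping by $j := N - |S|$ (the power of $\lambda$) converts the right-hand side into $\sum_{j=0}^{N} (-1)^j \lambda^j \, e_{N-j}$, where $e_m$ denotes the $m$-th elementary symmetric polynomial in the $N$ conjugates $\{\conj{U}{A} : A \subseteq \firstn{n}\}$. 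Comparing with $\psi_U(\lambda) = \sum_{k=0}^{N} c_k(U) \lambda^k$ and reading off the coefficient of $\lambda^k$ gives precisely
\begin{equation*}
c_k(U) \;=\; (-1)^k \sum_{\substack{A_1,\dots,A_{N-k}\subseteq\firstn{n} \\ \text{distinct}}} \prod_{1 \le i \le N-k} \conj{U}{A_i},
\end{equation*}
which is the claimed formula, and for $k = N$ only the empty product survives, yielding $c_N(U) = (-1)^N$.

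There is essentially no obstacle of substance here: once Theorem \ref{thm formula for determinant} is in place, the argument is a routine expansion. The only point requiring care is bookkeeping the sign conventions and confirming that the sum over distinct $(N{-}k)$-element unordered subfamilies of $\{\conj{U}{A}\}$ corresponds exactly to the elementary symmetric polynomial $e_{N-k}$, which is immediate from the definition.
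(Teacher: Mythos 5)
Your proposal is correct and takes essentially the same route as the paper: apply Theorem \ref{thm formula for determinant} to $U-\lambda 1$, note each factor equals $\conj{U}{A}-\lambda$, and read off the coefficients of $\lambda^k$ as signed elementary symmetric polynomials in the conjugates. You merely spell out the intermediate bookkeeping more explicitly than the paper does.
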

\smallskip
\begin{proof}
     One can explicitly express the characteristic polynomial of $U$, $\psi_U$

    \begin{multline*}
        \psi_U(\lambda) = {\sum_{k=0}^{N}}{{c_k(U)}{\lambda^k}}
        = \Det{U-\lambda 1} = (U-\lambda 1)\conj{(U-\lambda 1)}{1}\conj{(U-\lambda 1)}{2}\\
        \cdots\conj{(U-\lambda 1)}{1)(2}\cdots\conj{(U-\lambda 1)}{1)(2)(3)\cdots(n}.
    \end{multline*}
    
    Now, the result follows from comparing coefficients of $\lambda^k$ on both sides of the above equation.
\end{proof}

Notice that $c_0(U)$ gives us the formula for determinant of $U$ that we know from Section \ref{Formula for determinant section} and $-c_{N-1}(U)$ is the trace of $U$ and thus gives us the formula for trace of $U$ which we had got in Theorem \ref{thm: another formula for trace}.

\begin{rem} 
    Note that we could get the same formulas for coefficients of characteristic polynomial \eqref{formula for characteristic polynomial coefficients} by using Vieta's formulas\footnote{Compare with the more non-trivial non-commutative Vieta formulas for the case of Clifford algebras presented in the paper \cite{OnNoncomm}.}. Since all conjugates of $U$, $\{\conj{U}{A} \;|\; A\subseteq\firstn{n}\}$ satisfy the characteristic polynomial of $U$, $\psi_U$ one can argue using Vieta's Theorem that the coefficients of characteristic polynomial $\psi_U$ are symmetric polynomials of $\{\conj{U}{A} \;|\; A\subseteq\firstn{n}\}$ i.e., the coefficients of the characteristic polynomial do not change under a permutation of the elements in the set $\{\conj{U}{A} \;|\; A\subseteq\firstn{n}\}$ which are precisely the expressions \eqref{formula for characteristic polynomial coefficients}.
\end{rem}

\section{Multiplicative inverses in $\dl{p}{q}$}\label{Section: mult inverses}
Unlike the problem of finding multiplicative inverses in Clifford algebra, the problem of finding multiplicative inverses in commutative analogues of Clifford algebra is easier because we have explicit formulas for determinant in general. Since a matrix is invertible if and only if its determinant is non-zero, it immediately follows that $U \in \dl{p}{q}$ is invertible if and only if $\Det{U} \ne 0$.

Define ${\rm Adj}:\dl{p}{q} \to \dl{p}{q}$,
\begin{eqnarray}
{\rm Adj}(U) := \conj{U}{1}\conj{U}{2}\cdots\conj{U}{1)(2}\cdots\conj{U}{1)(2)(3)\cdots(n},
\end{eqnarray}
such that $\Det{U}$ $ = U {\rm Adj}(U)$. We call ${\rm Adj}(U)$ \textit{the adjoint} of the element $U$.

\subsection{Explicit formulas for multiplicative inverses}
\begin{thm}\label{formula for inverse theorem}
    If $\Det{U} \ne 0$, then the inverse of $U$ is given by 
    $$\frac{{\rm Adj}(U)}{\Det{U}}.$$
\end{thm}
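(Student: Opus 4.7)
The plan is to reduce the statement to the formula for the determinant proved in Theorem \ref{thm formula for determinant}. By the very definition of $\mathrm{Adj}(U)$, that formula can be rewritten as $\Det{U} = U \cdot \mathrm{Adj}(U)$, which is the key identity for the proof.

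First I would observe that $\Det{U} \in \R$ (by definition, it is the determinant of a real matrix), so when $\Det{U} \ne 0$ it is an invertible scalar and the expression $\mathrm{Adj}(U)/\Det{U}$ makes sense as an element of $\dl{p}{q}$. Next I would compute
\begin{equation*}
U \cdot \frac{\mathrm{Adj}(U)}{\Det{U}} = \frac{U \cdot \mathrm{Adj}(U)}{\Det{U}} = \frac{\Det{U}}{\Det{U}} = 1,
\end{equation*}
where the middle equality is precisely Theorem \ref{thm formula for determinant} rewritten using the notation $\mathrm{Adj}(U)$. Since $\dl{p}{q}$ is commutative, the product in the reverse order is equal as well, so $\mathrm{Adj}(U)/\Det{U}$ is a two-sided inverse of $U$.

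There is really no obstacle in this step; all the work has already been done in Theorem \ref{thm formula for determinant}. The only thing worth remarking on is that one should also verify the converse direction implicit in the theorem's setup, namely that $\Det{U} \ne 0$ is necessary for invertibility: if $U$ were invertible with inverse $V$, then applying the faithful representation $\beta$ gives $\beta(U)\beta(V) = \beta(1) = \mathrm{I}$, so $\beta(U)$ is an invertible matrix and hence $\Det{U} = \det \beta(U) \ne 0$. This shows that the criterion $\Det{U} \ne 0$ is both necessary and sufficient, and the explicit formula $\mathrm{Adj}(U)/\Det{U}$ then gives the inverse whenever it exists.
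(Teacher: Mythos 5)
Your proof is correct and follows essentially the same route as the paper: both rest on the identity $\Det{U} = U\,{\rm Adj}(U)$ from Theorem \ref{thm formula for determinant} and divide by the nonzero real scalar $\Det{U}$. The extra remarks on well-definedness, commutativity, and the necessity of $\Det{U}\ne 0$ are sound but already covered by the paper's surrounding discussion.
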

\begin{proof}
    The proof follows from that fact that $\Det{U} = U{\rm Adj}(U)$. Therefore, if $\Det{U}\ne0$, then $U^{-1} = \frac{{\rm Adj}(U)}{\Det{U}}.$
\end{proof}
We give explicit formulas for ${\rm Adj}(U)$ for $n \le 4$ below:
$${\rm Adj}(U) = \left\lbrace
\begin{array}{ll}
      \conj{U}{1}, & \mbox{if n = 1};\\
      \conj{U}{1}\conj{U}{2}\conj{U}{12}, & \mbox{if n = 2};\\
      \conj{U}{1}\conj{U}{2}\conj{U}{3}\conj{U}{12}\conj{U}{23}\conj{U}{13}\conj{U}{123}, & \mbox{if n = 3};\\
      \conj{U}{1}\conj{U}{2}\conj{U}{3}\conj{U}{4}\conj{U}{12}\conj{U}{13}\conj{U}{14}\conj{U}{23}\\
      \,\,\cdot\,\conj{U}{24}\conj{U}{34}\conj{U}{123}\conj{U}{134}\conj{U}{124}\conj{U}{234}\conj{U}{1234}, & \mbox{if n = 4}.
\end{array}
\right.
$$
In context of multicomplex spaces, the above formulas for $n=2,3$ are known as mentioned earlier. The formula for a general multicomplex space, according to the best of our knowledge, is not known. 

\begin{thm}[Inverse of conjugate] 
    It is the case that $\left(\conj{U}{A}\right)^{-1} = \conj{\left(U^{-1}\right)}{A}.$
\end{thm}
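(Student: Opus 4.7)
The plan is to reduce the statement to a direct application of property~\eqref{properties of operations of conjugations 4}, i.e., the fact that conjugations distribute over multiplication. The key observation is that applying the conjugation $(A)$ to the defining identity $U\cdot U^{-1}=1$ immediately produces the desired formula, provided we know that the left-hand side $\left(\conj{U}{A}\right)^{-1}$ actually exists.

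First, I would verify that $\conj{U}{A}$ is invertible whenever $U$ is. This is where Theorem~\ref{all conjugates of U have same determinant theorem} is used: it gives $\Det{\conj{U}{A}} = \Det{U}$, so $\Det{U}\neq 0$ forces $\Det{\conj{U}{A}}\neq 0$, and hence $\conj{U}{A}$ is invertible by the invertibility criterion stated at the start of Section~\ref{Section: mult inverses}.

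Next, I would apply the conjugation $\conj{(.)}{A}$ to both sides of $U\cdot U^{-1}=1$. By property~\eqref{properties of operations of conjugations 4},
\begin{equation*}
\conj{U}{A}\cdot\conj{\left(U^{-1}\right)}{A}=\conj{1}{A}=1,
\end{equation*}
the last equality holding because $1$ contains no generators $e_k$, so every conjugation fixes it (this can also be read off from the linearity \eqref{properties of operations of conjugations 2} applied to $1=e_{\{\}}$). By uniqueness of the multiplicative inverse in an associative algebra with unity, this forces $\conj{\left(U^{-1}\right)}{A}=\left(\conj{U}{A}\right)^{-1}$.

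There is no real obstacle here; the only thing to be careful about is not to divide by $\Det{U}$ prematurely or invoke the explicit adjoint formula from Theorem~\ref{formula for inverse theorem}, since the result is cleaner as an abstract consequence of the group structure of conjugations (Remark~\ref{operations of conjugations form an abelian group}) together with their multiplicativity. If desired, one can give a second proof using the explicit formula: from Theorem~\ref{formula for inverse theorem},
\begin{equation*}
\conj{\left(U^{-1}\right)}{A}=\conj{\left(\frac{{\rm Adj}(U)}{\Det{U}}\right)}{A}=\frac{\conj{{\rm Adj}(U)}{A}}{\Det{U}},
\end{equation*}
and using that $(A)$ permutes the factors of ${\rm Adj}(\cdot)$ (again by Remark~\ref{operations of conjugations form an abelian group}) together with $\Det{\conj{U}{A}}=\Det{U}$, the right-hand side equals ${\rm Adj}(\conj{U}{A})/\Det{\conj{U}{A}}=(\conj{U}{A})^{-1}$, recovering the same conclusion.
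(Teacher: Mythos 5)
Your proposal is correct, and your primary argument takes a genuinely different route from the paper's. The paper proves the statement by establishing the adjoint identity ${\rm Adj}(\conj{U}{A}) = \conj{{\rm Adj}(U)}{A}$ (which follows from Remark~\ref{operations of conjugations form an abelian group}: conjugating by $(A)$ permutes the factor set $\{\conj{U}{B}\,:\,B\ne\{\}\}$ into $\{\conj{U}{C}\,:\,C\ne A\}$) and then dividing by $\Det{\conj{U}{A}}=\Det{U}$ --- this is exactly the ``second proof'' you sketch at the end. Your main argument instead applies $\conj{(.)}{A}$ directly to $U\cdot U^{-1}=1$ and invokes only the multiplicativity \eqref{properties of operations of conjugations 4} together with $\conj{1}{A}=1$; this is more elementary, as it bypasses $\Det{}$ and ${\rm Adj}$ entirely (indeed, since $\dl{p}{q}$ is commutative, the identity $\conj{U}{A}\cdot\conj{(U^{-1})}{A}=1$ by itself already certifies that $\conj{U}{A}$ is invertible with the stated inverse, so even the preliminary appeal to Theorem~\ref{all conjugates of U have same determinant theorem} is a safety measure rather than a necessity). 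What the paper's route buys in exchange is the explicit adjoint identity ${\rm Adj}(\conj{U}{A}) = \conj{{\rm Adj}(U)}{A}$, which is of independent interest alongside the formulas of Section~\ref{Section: mult inverses}; what your route buys is an argument that would survive even in settings where no determinant or adjoint is available, only a multiplicative involution fixing $1$.
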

\begin{proof}
    This follows from ${\rm Adj}(\conj{U}{A}) = \conj{{\rm Adj}(U)}{A}$. 
\end{proof}
\subsection{A classification of elements in $\dl{p}{q}$ based on $\mathrm{Det}$}\label{Section: A classification of elements in DL(p,q) based on Det}
An element $U \in \dl{p}{q}$ is invertible if and only if $\Det{U} \ne 0$. What happens when $\Det{U} = 0$? This is answered by the following theorem.

\begin{thm}\label{thm: non-invertible elements are zero divisors}
$\elt{p}{q}{U}$ is a zero divisor (i.e., there exists a non-zero element $V \in \dl{p}{q}$ such that $UV=0$) if and only if $\Det{U}=0$.
\end{thm}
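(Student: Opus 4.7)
The plan is to split the biconditional and handle each direction using the representation $\beta$ together with the explicit inverse supplied by Theorem \ref{formula for inverse theorem}. The easy direction is the contrapositive ``$\Det{U} \neq 0 \Rightarrow U$ is not a zero divisor'': Theorem \ref{formula for inverse theorem} produces $U^{-1} = {\rm Adj}(U)/\Det{U}$, so multiplying $UV = 0$ by $U^{-1}$ forces $V = 0$.

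For the converse, I would lean on the observation recorded at the end of Section \ref{Section: matrix rep} that $\beta$ is the \emph{left regular} representation of $\dl{p}{q}$ on itself. Concretely, identify $\dl{p}{q}$ with $\R^N$ via the ordered basis $\{e_A : A \subseteq \firstn{n}\}$ (with $e_{\{\}} = 1$ placed in the first slot), and let $\phi : \dl{p}{q} \to \R^N$ denote the resulting coordinate isomorphism; then by construction $\beta(V)\phi(W) = \phi(VW)$ for all $V, W \in \dl{p}{q}$. If $\Det{U} = \det{\beta(U)} = 0$, then $\beta(U)$ is singular, so pick a nonzero $v \in \ker \beta(U) \subseteq \R^N$, set $V := \phi^{-1}(v) \in \dl{p}{q} \setminus \{0\}$, and compute
$$\phi(UV) = \beta(U)\phi(V) = \beta(U)\,v = 0,$$
from which $UV = 0$ and $U$ is exhibited as a zero divisor.

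The main point to get right is ensuring that the witness $V$ sits inside $\dl{p}{q}$ rather than merely inside $\Mat(N,\R)$: the classical matrix adjugate of $\beta(U)$ need not lie in the image of $\beta$, so invoking it directly would be awkward. The algebra-native identity $U \cdot {\rm Adj}(U) = \Det{U} \cdot 1$ from Theorem \ref{thm formula for determinant} already delivers a clean witness $V = {\rm Adj}(U)$ whenever ${\rm Adj}(U) \neq 0$, but a separate inductive argument (on $n$, via the tensor splitting $U = x + y e_n$) would then be needed for the degenerate case ${\rm Adj}(U) = 0$. Exploiting the regularity of $\beta$ dispatches both subcases uniformly in one line, which is why I would route the argument through the representation.
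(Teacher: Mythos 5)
Your proposal is correct, and the first direction coincides with the paper's argument, but your proof of the converse takes a genuinely different route. The paper proceeds constructively: starting from $U$ it eliminates the generators one at a time by multiplying with conjugates (first testing the products $U\conj{U}{j}$, then $W\conj{W}{j}$ for $W=U\conj{U}{1}$, and so on), stopping at the first stage where the product vanishes; the witness $V$ is then an explicit product of conjugates of $U$, with ${\rm Adj}(U)$ as the worst case when only the full product $\Det{U}$ is zero. You instead invoke the observation at the end of Section \ref{Section: matrix rep} that $\beta$ is the left regular representation, so that a non-zero vector in $\ker\beta(U)$ pulls back under the coordinate isomorphism to a non-zero $V\in\dl{p}{q}$ with $UV=0$. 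Your route is shorter and in one respect tighter: the paper's iteration is somewhat informal about why the intermediate witnesses such as $\conj{U}{1}\conj{U}{j}\conj{U}{1)(j}$ are themselves non-zero, a point your kernel argument sidesteps entirely (you only need $v\neq 0$, and $\phi$ is injective). What the paper's approach buys in exchange is a witness expressed purely through the operations of conjugation, in keeping with its stated goal of bypassing the matrix representation, whereas your witness requires computing a kernel vector of the $N\times N$ matrix $\beta(U)$. Your closing remark that $V={\rm Adj}(U)$ works whenever ${\rm Adj}(U)\neq 0$ is exactly the terminal case of the paper's iteration, so the two arguments meet there.
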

\begin{proof}
In one direction the statement is obvious. If $U$ is a zero divisor, then there exists $V\ne0$ such that $UV=0$. Suppose that $\Det{U}\neq 0$, i.e. $U$ is invertible. Then, multiplying $UV=0$ on the left by $U^{-1}$, we get $V=0$, i.e. a contradiction.

        For the other direction, let  
        $\Det{U} = 0$. The idea is to start eliminating terms involving $e_i$s one by one by multiplying an element with its conjugates keeping in mind Lemma \ref{properties of operations of conjugations 5}. Consider the products $U\conj{U}{j}$, $j \in \firstn{n}$. If for some $j$, $U\conj{U}{j} = 0$, then we are done because we can take $V = \conj{U}{j}$. If not, consider the products $W\conj{W}{j}$, $j \in \{2,3,\hdots,n\}$ where $W = U\conj{U}{1}$. If for some $j$, $W\conj{W}{j} = 0$, then we are done because we can take $V = \conj{U}{1}\conj{U}{j}\conj{U}{1)(j}$. We continue repeating this process. The worst case is that the only product that will be zero is $\Det{U}$, but then we are again done.
\end{proof}
This means that the non-invertible elements in $\dl{p}{q}$ are zero divisors. The same result holds for Clifford algebras as well and follows because Clifford algebras are isomorphic to matrix algebras.

\begin{rem}
Note that we can also calculate all coefficients of the characteristic polynomial, adjoint, and inverse of $U\in\dl{p}{q}$ using the well-known Faddeev–LeVerrier algorithm (a recursive method to calculate the coefficients of the characteristic polynomial from the matrix theory)\footnote{Similarly to how we do it in (noncommutative) Clifford algebras, see \cite{OnComputing}.}:
\begin{eqnarray}
U_{(1)}:=U,\quad U_{(k+1)}:=U(U_{(k)}-c_k),\quad c_k=\frac{N}{k}\langle U_{(k)} \rangle_0,\quad 1\le k \le N,
\end{eqnarray}
in particular,
\begin{eqnarray}
\Det{U}=-U_{(N)}=-c_N=U(c_{N-1}-U_{(N-1)}).
\end{eqnarray}
If $\Det{U}\neq 0$, then
\begin{eqnarray}
U^{-1}=\frac{{\rm Adj}(U)}{\Det{U}},\qquad {\rm Adj}(U)=c_{N-1}-U_{(N-1)}.
\end{eqnarray}
\end{rem}

\section{Conclusions}\label{section_conclusions}
In this paper, we completely solve the problem of multiplicative inverses in commutative analogues of Clifford algebras $\dl{p}{q}$. In order to do that, we give a matrix representation for $\dl{p}{q}$ and using the representation, associate the notion determinant. We also associate notions of trace and characteristic polynomial coefficients. It is to be noted that commutativity allowed for explicit formulas, i.e., formulas that bypass the matrix representation completely, for trace (Theorems \ref{formula for trace} and \ref{thm: another formula for trace}), formula for determinant (Theorem \ref{thm formula for determinant}), formula for characteristic polynomials coefficients (Theorem \ref{thm formula for characteristic polynomial coefficients}) and formula for multiplicative inverse (Theorem \ref{formula for inverse theorem}). 

The presented explicit formulas for determinant in $\dl{p}{q}$ \eqref{formula for determinant} and \eqref{formula for determinant 2} are a single product of conjugates. This is in contrast with the formulas for determinant in Clifford algebras $\cl{p}{q}$ where explicit formulas for determinant are in general a linear combination of terms involving conjugates (we have an algorithm to calculate it, check Theorem 4 in \cite{OnComputing} and Theorem 3 in \cite{OnNoncomm}). For $p+q\le5$, formula for determinant is a single product (check \cite{OnComputing}), for $p+q=6$ the formula is a linear combination of 2 terms (check \cite{A.Acus}) and for $p+q\ge7$ the number of terms in the formula for determinant is not known. Our main aim for this paper was to address the multiplicative inverse problem in $\dl{p}{q}$ i.e. how to find multiplicative inverses in $\dl{p}{q}$ because we believe that solving the multiplicative inverse problem in commutative analogues might help us better understand the multiplicative inverse problem in $\cl{p}{q}$ and allow us to find more explicit formulas for determinants for $p+q\ge7$ with least number of terms in the expression for determinant. The connection between commutative analogues of Clifford algebras and Clifford algebras is made by some subalgebras of $\cl{p}{q}$ being isomorphic to $\dl{p^\prime}{q^\prime}$ that cause trouble in Clifford algebras by forcing the formula for determinant to be a linear combination rather than a single product (check out \cite{A.Acus}). This is a topic to be further explored.

As commutative analogues of Clifford algebras generalise multicomplex spaces, the formulas for multiplicative inverses (Theorem \ref{formula for inverse theorem}) given in this paper would also work for multicomplex spaces. The formula given in this paper generalises the known formulas for multiplcative inverses in bicomplex and tricomplex spaces \cite{PlatonicSolids, DynamicsTricomplex,Price, BCHolFunction, Shapiro}.

Also, we expect the use of commutative analogues of Clifford algebras $\dl{p}{q}$ in various applications, just like bicomplex spaces and commutative quaternions. Bicomplex spaces are used in the study of Mandelbrot set \cite{SlicesOfMandelbrodSet,FrenchMSThesis,MSThesis,PlatonicSolids}, quantum mechanics \cite{BCQM1,BCQM2,MCSchrodingerEQn}, and in many other applications \cite{FinDImBCHilbertSpace,BCRiemannZetaFunc,MCRiemannZetaFunc,BCPolygammaFunction}. Commutative quaternions are widely used in computer science \cite{ZhangWangVasilevSVD, ZhangJiangVasilevLeastSQ, ZhangGuoWangJiangLeastSQ, ZhangWangVasilevJiangLeastSQ, Isokawa, Kobayashi} and theoretical physics \cite{Catoni3, ZhangWangVasilevJiangMaxwellEqn}. In particular, since Theorem \ref{formula for inverse theorem} gives multiplicative inverses in the algebras $\dl{p}{q}$, we believe it to be useful in all applications that require solving linear equations with coefficients in $\dl{p}{q}$.

\section*{Acknowledgment}
The first author deeply thanks Prof. Dmitry Shirokov for giving him the opportunity to work with him. The first author also wishes to thank his friends, especially Chitvan Singh and Ipsa Bezbarua, who were always there for him.

The authors are grateful to the anonymous reviewers for their careful reading of the paper and helpful comments on how to improve the presentation.

The article was prepared within the framework of the project “Mirror Laboratories” HSE University “Quaternions, geometric algebras and applications”.
\medskip

{\bf Data availability} Data sharing is not applicable to this article as no datasets were generated or analyzed during the current study.

\medskip

\noindent{\bf Declarations}\\
\noindent{\bf Conflict of interest} The authors declare that they have no conflict of interest.


\bibliographystyle{spmpsci}

\end{document}